\newtheorem{theorem}{Theorem}[section]
\newtheorem{lemma}[theorem]{Lemma}
\numberwithin{equation}{section}
\newcommand{\lbl}[1]{\label{#1}}
\newcommand{\be}{\begin{equation}}
\newcommand{\ee}{\end{equation}}
\newcommand\bes{\begin{eqnarray}} \newcommand\ees{\end{eqnarray}}
\newcommand{\bess}{\begin{eqnarray*}}
\newcommand{\eess}{\end{eqnarray*}}
\newcommand{\bbbb}{\left\{\begin{aligned}}
\newcommand{\nnnn}{\end{aligned}\right.}
\newcommand{\bea}{\begin{align*}}
\newcommand{\eea}{\end{align*}}
\newcommand\ep{\varepsilon}
\newcommand\kk{\left}
\newcommand\rr{\right}
\newcommand\dd{\displaystyle}
\newcommand\df{\dd\frac}
\newcommand\ap{\alpha}
\newcommand\yy{\infty}
\newcommand\qq{\eqref}
\newcommand\R{\mathbb{R}}
\newcommand\ol{\overline}
\newcommand\br{\bar R}
\begin{document}
\setlength{\baselineskip}{16pt} \pagestyle{myheadings}

\begin{center}{\Large\bf Free boundary problems with nonlocal}\\[2mm]
{\Large\bf  and local diffusions I: Global solution\footnote{This work was supported by NSFC Grant 11771110}}\\[4mm]
  {\Large  Jianping Wang, \ Mingxin Wang\footnote{Corresponding author. {\sl E-mail}: mxwang@hit.edu.cn}}\\[1.5mm]
{School of Mathematics, Harbin Institute of Technology, Harbin 150001, PR China}
\end{center}

\date{\today}

\begin{abstract}We study a class of free boundary problems of ecological models with nonlocal and local diffusions, which are natural extensions of free boundary problems of reaction diffusion systems in there local diffusions are used to describe the population dispersal, with the free boundary representing the spreading front of the species. We prove that such kind of nonlocal and local diffusion problems has a unique global solution, and then show that a spreading-vanishing dichotomy holds. Moreover, criteria of spreading and vanishing, and long time behavior of solution when spreading happens are established for the classical Lotka-Volterra competition and prey-predator models. Compared with free boundary systems with local diffusions \cite{GW12, WZjdde14, WZjdde17} as well as with nonlocal diffusions \cite{DWZ19}, the present paper involves some new difficulties, which should be overcome by use of new techniques. This is part I of a two part series, where we prove the existence,  uniqueness, regularity and estimates of global solution. The spreading-vanishing dichotomy, criteria of spreading and vanishing, and long-time behavior of solution when spreading happens will be studied in the separate part II.

\textbf{Keywords}: Nonlocal-local diffusions; Free boundaries;  Existence-uniqueness; Global solution.

\textbf{AMS Subject Classification (2010)}: 35K57, 35R09, 35R20, 35R35, 92D25

\end{abstract}

\section{Introduction}
\setcounter{equation}{0} {\setlength\arraycolsep{2pt}
\markboth{J. P. Wang and M. X. Wang}{Free boundary problems with nonlocal and local diffusions}

The spreading and vanishing of multiple species is an important content in understanding ecological complexity. In order to study the spreading and vanishing phenomenon, many mathematical models have been established. The logistic equation, competition and prey-predator models with local diffusions and free boundaries have been studied widely by many authors, please refer to, for example, \cite{1-DLin10} for the logistic equation, \cite{GW12, WZjdde14}, \cite{DLdcds14}-\cite{7-WZna17} for the competition models, \cite{ZW2014}-\cite{ZWaa15} for the prey-predator models, and the references therein. The general form of single equation with local diffusion and free boundaries is (\cite{2-W18}):
 \bes
 \left\{\begin{array}{lll}
 u_t-du_{xx}=f(t,x,u), \ \ &t>0,\, \ g(t)<x<h(t),\\[.5mm]
 u(t,x)=0, \ \ &t>0,\, \ x=g(t),\ h(t),\\[.5mm]
 g'(t)=-\mu u_x(t,g(t)),\ \ \ \ &t>0,\\[.5mm]
 h'(t)=-\mu u_x(t,h(t)),\ \ \ \ &t>0,\\[.5mm]
 u(0,x)=u_0(x),&|x|\le h_0,\\[.5mm]
 h(0)=-g(0)=h_0.
 \end{array}\right.
 \label{1.1}\ees
For the deduction of the free boundary condition $h'(t)=-\mu u_x(t,h(t))$, please refer to \cite{10-hdu12}.

It is well known that random dispersal or local diffusion describes the movements of organisms between adjacent spatial locations. It has been increasingly recognized the movements and interactions of some organisms can occur between non-adjacent spatial locations. The evolution of nonlocal diffusion has attracted a lot of attentions for both theoretically and empirically; see \cite{Nathan12}-\cite{8-BJMB16} and references therein. An extensively used nonlocal diffusion operator to replace the local diffusion term $d\Delta u$ (the Laplacian operator in $\mathbb{R}^N$) is given by
 \[d(J * u-u)(t,x):=d\left(\int_{\mathbb R^N} J(x-y)u(t,y)dy-u(t,x)\right).\]

To describe the spatial spreading of species in the nonlocal diffusion processes, recently, the authors of \cite{CDLL} studied the following free boundary problem of Fisher-KPP  nonlocal diffusion model:
 \begin{equation}\left\{\begin{aligned}
&u_t=d\int_{g(t)}^{h(t)}\!J(x-y)u(t,y){\rm d}y-du(t,x)+f(t,x,u), & &t>0,~g(t)<x<h(t),\\
&u(t,g(t))=u(t,h(t))=0,& &t>0,\\
&h'(t)=\mu\int_{g(t)}^{h(t)}\!\!\int_{h(t)}^{\infty}
J(x-y)u(t,x){\rm d}y{\rm d}x,& &t>0,\\
&g'(t)=-\mu\int_{g(t)}^{h(t)}\!\!\int_{-\infty}^{g(t)}
J(x-y)u(t,x){\rm d}y{\rm d}x,& &t>0,\\
&u(0,x)=u_0(x),~h(0)=-g(0)=h_0,& &|x|\le h_0,
 \end{aligned}\right.
 \label{1.2}\end{equation}
where $x=g(t)$ and $x=h(t)$ are free boundaries
to be determined together with $u(t,x)$, which is always assumed to be identically $0$ for $x\in \mathbb{R}\setminus [g(t), h(t)]$; $d$, $\mu$ and $h_0$ are positive constants. The kernel function $J: \mathbb{R}\rightarrow\mathbb{R}$ is continuous and satisfies
 \begin{enumerate}[leftmargin=3em]
\item[{\bf(J)}] $J(0)>0,~J(x)\ge 0, \ \dd\int_{\mathbb{R}}J(x){\rm d}x=1$, \ $J$\, is\, symmetric, \ and\ $\dd\sup_{\mathbb{R}}J<\infty$.
 \end{enumerate}
The reaction function $f(t,x,u)$ has logistic structure. It was shown in \cite{CDLL} that the problem \eqref{1.2} has a unique global solution. Furthermore, the spreading-vanishing dichotomy about free boundary problems of local diffusive logistic equation (\cite{1-DLin10}) holds true for the nonlocal diffusive problem \eqref{1.2} when $f(t,x,u)=f(u)$. However, from \cite[Remark 1.4]{CDLL} we know that when $d\le f'(0)$, spreading happens no matter how small $h_0,\mu$ and $u_0$ are. This is very different from the spreading-vanishing
criteria for the local diffusion models.

Motivated by the papers \cite{CDLL} and \cite{GW12, WZjdde14, WZhang16, ZhaoW16, WWmma18} (two species local diffusion systems with common free boundary), the authors of \cite{DWZ19} studied the following free boundary problem of nonlocal diffusive system
 \bes
\left\{\begin{aligned}
&u_{it}=d_i\dd\int_{g(t)}^{h(t)}\!J_i(x-y)u_i(t,y){\rm d}y-d_iu_i+f_i(t, x,u_1,u_2), &&t>0,~g(t)<x<h(t),\\
&u_i(t,g(t))=u_i(t,h(t))=0, &&t\ge 0,\\
&h'(t)=\dd\sum_{i=1}^2\mu_i \int_{g(t)}^{h(t)}\!\!\int_{h(t)}^{\infty}
J_i(x-y)u_i(t,x){\rm d}y{\rm d}x, &&t\ge 0,\\
&g'(t)=-\dd\sum_{i=1}^2\mu_i\int_{g(t)}^{h(t)}\!\!\int_{-\infty}^{g(t)}\!
J_i(x-y)u_i(t,x){\rm d}y{\rm d}x,\ &&t\ge 0,\\
&u_i(0,x)=u_{i0}(x),\ \ \ h(0)=-g(0)=h_0, &&|x|\le h_0,\\
&i=1,\,2.
\end{aligned}\right.\label{1.2a}
 \ees
They proved the existence and uniqueness of global solution, a spreading-vanishing dichotomy and obtained the criteria for spreading and vanishing.

Kao et al. \cite{6-KLS10} studied the competition model in which one diffusion is local and the other one is nonlocal:
  $$\left\{\begin{aligned}
& u_t=d_1\Delta u+ u(a-u-v), && t>0,~x\in\Omega,\\
&v_t=d_2\dd\int_{\Omega}J(x-y)v(t,y){\rm d}y-d_2v+v(a-u-v),
&& t>0,~x\in\Omega.
\end{aligned}\right.$$

Motivated by the above mentioned works, in this paper we discuss some ecological models with nonlocal and local diffusions and free boundaries. Based on the deductions of free boundary conditions in \eqref{1.1} and \eqref{1.2}, it is reasonable to study the following free boundary problems:
 \bes
\left\{\begin{aligned}
&u_t=d_1\int_{g(t)}^{h(t)}\!J(x,y)u(t,y){\rm d}y-d_1u+f_1(t,x,u,v), &&t>0,~g(t)<x<h(t),\\
&v_t=d_2 v_{xx}+f_2(t,x,u,v), &&t>0,~g(t)<x<h(t),\\
&u(t,g(t))=u(t,h(t))=v(t,g(t))=v(t,h(t))=0, &&t\ge 0,\\
&h'(t)=-\mu v_x(t,h(t))+\rho\int_{g(t)}^{h(t)}\!\!\int_{h(t)}^\infty\! J(x,y)u(t,x){\rm d}y{\rm d}x, &&t\ge 0,\\[1mm]
&g'(t)=-\mu v_x(t,g(t))-\rho\int_{g(t)}^{h(t)}\!\!\int_{-\infty}^{g(t)}\! J(x,y)u(t,x){\rm d}y{\rm d}x, &&t\ge 0,\\
&u(0,x)=u_0(x), \ v(0,x)=v_0(x),\ h(0)=-g(0)=h_0>0, &&|x|\le h_0,
\end{aligned}\right.
 \label{1.3}
 \ees
where $J(x,y)=J(x-y)$; $[-h_0,h_0]$ represents the initial population range of the species $u$ and $v$; $x=g(t)$ and $x=h(t)$ are the free boundaries to be determined together with $u(t,x)$ and $v(t,x)$, which are always assumed to be identically $0$ for $x\in \mathbb{R}\setminus [g(t), h(t)]$; $d_i$ and $\mu,\rho$ are positive constants.

Denote by $C^{1-}(\Omega)$ the Lipschitz continuous function space in $\Omega$. We assume that the initial functions $u_0,v_0$ satisfy
 \begin{equation}
(u_0,v_0)\in C^{1-}([-h_0,h_0])\times W^2_p(-h_0,h_0), \ u_0(\pm h_0)=v_0(\pm h_0)=0, \ u_0,\,v_0>0~\text{ in }~(-h_0,h_0)
\label{1.4}
 \end{equation}
with $p>3$. The kernel function $J$ is supposed to satisfy
 \begin{enumerate}[leftmargin=4em]
\item[{\bf(J1)}] The condition {\bf(J)} holds, and $J\in C^{1-}(\R)$.
 \end{enumerate}
It follows from {\bf(J)} that there exist constants $\bar\varepsilon\in(0,h_0/4)$ and $\delta_0>0$ such that
 \bes
J(x,y)>\delta_0\ \ \ {\rm if}\ \ |x-y|<\bar\varepsilon.
 \label{1.5}\ees
The growth terms $f_i: \mathbb{R}^+\times\mathbb{R}\times\mathbb{R}^+\times\mathbb{R}^+
\rightarrow\mathbb{R}$ are assumed to be continuous and satisfy
 \vspace{-2mm}\begin{enumerate}[leftmargin=3em]
\item[{\bf(f)}] $f_1(t,x,0,v)=f_2(t,x,u,0)=0$, $f_i(t,x,u, v)$
is differentiable with respect to $u,v\in\mathbb{R}^+$, and for any $c_1, c_2>0$, there exists a constant $L(c_1, c_2)>0$ such that
 \[|f_i(t,x,u_1,v_1)-f_i(t,x,u_2,v_2)|\le L(c_1, c_2)(|u_1-u_2|+|v_1-v_2|),\ i=1,2\]
for all $u_1, u_2\in [0, c_1]$, $v_1, v_2\in [0, c_2]$ and all $(t,x)\in \mathbb{R}^+\times \mathbb{R}$;\vspace{-2mm}
\item[{\bf(f1)}] There exist $k_0>0$ and $r>0$ such that for all $v\ge 0$ and $(t,x)\in \mathbb{R}^+\times \mathbb{R}$, there hold: $f_1(t,x,u,v)<0$ when $u>k_0$, $f_1(t,x,u,v)\le ru$ when $0<u\le k_0$;\vspace{-2mm}
\item[{\bf(f2)}] For the given $k>0$, there exists $\Theta(k)>0$ such that $f_2(t,x,u,v)<0$ for $0\le u\le k$, $v\ge\Theta(k)$ and $(t,x)\in\mathbb{R}^+\times\mathbb{R}$;\vspace{-2mm}
\item[{\bf(f3)}] $f_{ix}(t,x,u,v)$ is continuous and for any $c_1, c_2>0$, there exists a constant $L^*(c_1, c_2)>0$ such that
 \[|f_i(t,x,u,v)-f_i(t,y,u,v)|\le L^*(c_1, c_2)|x-y|, \ \ i=1,\,2\]
for all $u\in [0, c_1]$, $v\in [0, c_2]$ and all $(t,x,y)\in \mathbb{R}^+\times\mathbb{R}\times\mathbb{R}$.
 \vspace{-2mm}\end{enumerate}

The condition {\bf(f)} implies
 \[|f_1(t,x,u,v)|\le L(c_1, c_2)u, \ \ |f_2(t,x,u,v)|\le L(c_1, c_2)v\]
 for all $u\in[0, c_1]$, $v\in[0, c_2]$ and all $(t,x)\in \mathbb{R}^+\times \mathbb{R}$.

Except where otherwise stated, we always assume that {\bf (f)}-{\bf (f3)} hold, the kernel
function $J$ satisfies {\rm \bf (J1)} and $u_0$, $v_0$ satisfy the condition \eqref{1.4} throughout this paper. We write $\|\phi,\varphi\|\le M$ means that $\|\phi\|\le M$, $\|\varphi\|\le M$.

Since this paper is very long, and the techniques used in the first part are rather different from those in the second part, it is divided into two separate parts. Part I here is mainly concerned with the existence, uniqueness, regularity and estimates of global solution. Part II focuses on the spreading-vanishing dichotomy, criteria of spreading and vanishing, and long time behavior of solution when spreading happens.

\section{Existence, uniqueness, regularity and estimates of global solution of (\ref{1.3})}
\setcounter{equation}{0} {\setlength\arraycolsep{2pt}

For convenience, we first introduce some notations. Let $L(u_0)$ and $L(J)$ be the Lipschitz constants of $u_0$ and $J$, respectively.
Let $k_0,\Theta(\cdot)$ be given in {\bf(f1)}, {\bf(f2)}. Denote
 \bess
 &k_1=\max\left\{\|u_0\|_\infty,\,k_0\right\},\ \ k_2=\max\left\{\|v_0\|_\infty,\,\Theta(k_1)\right\},\ \ L=L(k_1,k_2),&\\[1mm]
 &L^*=L^*(k_1,k_2),\ \ \ k_3=\dd\max\left\{\frac 1{h_0},\ \ \sqrt{\frac{L}{2d_2}}, \
 \frac {\|v_0'\|_{C([-h_0,h_0])}}{k_2}\right\},&\\[1mm]
 &x(t,y)=\dd\frac{(h(t)-g(t))y+h(t)+g(t)}2,\ \ \ y(t,x)=\frac{2x-g(t)-h(t)}{h(t)-g(t)},&\\[1mm]
 &\xi(t)=\df{4}{(h(t)-g(t))^2},\ \ \ \zeta(t,y)=\df{h'(t)+g'(t)}{h(t)-g(t)}+\df{(h'(t)-g'(t))y}{h(t)-g(t)},&\\[1mm]
  &\Sigma=[-1,1],\ \ \Pi_s=[0,s]\times\Sigma, \ \ R(t)=\mu k_3+2(h_0\rho k_1+\mu k_3)e^{\rho k_1t}.&
 \eess
For the given $T>0$, define
 \bess
\mathbb H^T&=&\left\{h\in C^1([0,T])~:~h(0)=h_0,
\; 0<h'(t)\le R(t)\right\},\\[1mm]
\mathbb G^T&=&\left\{g\in C^1([0,T])~:-g\in\mathbb{H}^T\right\}.
 \eess
And for $g\in \mathbb G^T$, $h\in\mathbb H^T$, define
 \bess
 D^T_{g,h}&=&\left\{(t,x)\in\mathbb{R}^2:\, 0<t\leq T,~g(t)<x<h(t)\right\},\\[1mm]
 \mathbb X_1^T&=&\mathbb X^T_{u_0,g,h}=\big\{\varphi\in C(\overline D^T_{g,h}):~0\le\varphi\le k_1,\ \varphi\big|_{t=0}=u_0,\; \varphi\big|_{x=g(t),h(t)}=0\big\},\\[1mm]
 \mathbb X_2^T&=&\mathbb X^T_{v_0,g,h}=\big\{\varphi\in C(\overline D^T_{g,h}):~0\le\varphi\le k_2,\ \varphi\big|_{t=0}=v_0,\; \varphi\big|_{x=g(t), h(t)}=0\big\},
 \eess
as well as
\[\mathbb X_{g,h}^T:=\mathbb X_1^T\times\mathbb X_2^T.\]

The following theorem is our main result in this part.

\begin{theorem}\label{th2.1}\,The problem \eqref{1.3} has a unique local solution $(u,v,g,h)$ defined on $[0,T]$ for some $0<T<\yy$. Moreover, $(g, h)\in\mathbb G^T\times\mathbb H^T$, $(u,v)\in\mathbb X^T_{g,h}$  and
 \bes
 &u\in C^{1,1-}(\ol D^T_{g,h}),\ \ \ \ v\in W^{1,2}_p(D^T_{g,h}),&
 \lbl{1.6}\\[1mm]
 &0< u\le k_1,\ \ 0<v\le k_2\ \ \ {\rm in}\ \ D^T_{g,h},& \label{2.1}\\[.5mm]
 &0<-v_x(t,h(t)),\ v_x(t,g(t))\le k_3, \ \ 0<t\le T,&
 \label{2.2}\ees
where $u\in C^{1,1-}(\ol D^T_{g,h})$
means that $u$ is differentiable continuously in $t\in[0,T]$ and is Lipschitz continuous in
$x\in[g(t),h(t)]$.

If we further assume that
\vspace{-2mm}\begin{enumerate}[leftmargin=4em]
 \item[{\bf(f4)}] For any given
$\tau$, $l$, $c_1$, $c_2>0$, there exists a constant $\bar L(\tau, l, c_1, c_2)$
such that
 \bes\|f_2(\cdot,x,u,v)\|_{C^{\frac\ap 2}([0,\tau])}\le \bar L(\tau, l, c_1, c_2)
 \lbl{1.8}\ees
for all $ x\in[-l,l], \ u\in[0,c_1], \ v\in[0,c_2]$.
 \vspace{-2mm}\end{enumerate}
Then the solution $(u,v, g, h)$ exists globally. Moreover, for any given $\tau>0$, \qq{2.1} and \qq{2.2} hold with $T$ replaced by $\tau$, and
 \bes
 g,\,h\in C^{1+\alpha/2}([0,\tau]), \ \  u\in C^{1,1-}(\ol D^\tau_{g,h}), \ \ v\in C^{1+\alpha/2,\,2+\alpha}((0,\tau]\times[g(t),h(t)]).
 \lbl{b.3}\ees
\end{theorem}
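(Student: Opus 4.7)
The plan is to first establish local existence via a nested fixed point argument on the space $\mathbb{G}^T\times\mathbb{H}^T$ of admissible free boundaries, then derive a priori bounds that permit iterative extension to a global solution, and finally bootstrap the regularity using \textbf{(f4)} together with Schauder estimates. I would begin by straightening the domain via $y=(2x-g(t)-h(t))/(h(t)-g(t))$, so that $x\in[g(t),h(t)]$ corresponds to $y\in\Sigma=[-1,1]$. After this transformation the system is posed on the fixed strip $\Pi_T$; the $v$-equation becomes a linear parabolic equation with coefficients depending on $g,h,g',h'$, while the $u$-equation remains an integral equation (with no spatial derivatives of $u$) with a time-dependent kernel supported on $\Sigma$.

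With $(g,h)\in\mathbb{G}^T\times\mathbb{H}^T$ fixed, I would construct $(u,v)\in\mathbb{X}^T_{g,h}$ by an inner fixed point. Given $v\in\mathbb{X}_2^T$, the $u$-equation is, after straightening, a family of scalar integral equations parametrised by $y$; for small $T$ it is a Banach contraction in $C(\overline{\Pi_T})$ using the Lipschitz hypothesis \textbf{(f)}, and the resulting $u$ inherits Lipschitz regularity in $x$ from $u_0$, $J$ and \textbf{(f3)}, hence $u\in C^{1,1-}$. Given this $u$, the linear parabolic equation for $v$ with Dirichlet data is solved via $L^p$-theory to obtain $v\in W^{1,2}_p(D^T_{g,h})$. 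The composed map $v\mapsto u[v]\mapsto v[u[v]]$ is then a contraction for $T$ small, producing a unique $(u,v)$. Maximum principle arguments based on \textbf{(f1)} and \textbf{(f2)} give $u\le k_1$ and $v\le k_2$; Hopf's lemma applied to $v$ yields $-v_x(t,h(t)),v_x(t,g(t))>0$, and a pointwise comparison with a linear barrier produces the bound $k_3$.

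To close the scheme, define $\Gamma(g,h)=(\tilde g,\tilde h)$ by integrating the free boundary ODEs
\[\tilde h(t)=h_0+\int_0^t\!\Big[-\mu v_x(s,h(s))+\rho\!\int_{g(s)}^{h(s)}\!\!\int_{h(s)}^{\infty}\!J(x-y)u(s,x)\,dy\,dx\Big]ds,\]
and symmetrically for $\tilde g$. The Sobolev embedding $W^{1,2}_p\hookrightarrow C^{(1+\alpha)/2,1+\alpha}$ (which requires $p>3$) controls $v_x$ up to the lateral boundary, and together with $u\le k_1$ this verifies $0<\tilde h'(t)\le R(t)$, so $\Gamma$ sends $\mathbb{G}^T\times\mathbb{H}^T$ into itself. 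For $T$ sufficiently small, $\Gamma$ is a contraction in $C^1([0,T])^2$, whose unique fixed point delivers the local solution with the regularity claimed in \eqref{1.6}--\eqref{2.2}. Since the a priori bounds $u\le k_1$, $v\le k_2$ and $|v_x|\le k_3$ on the lateral boundary are independent of $T$, the local solution can be continued step by step, and uniform control of $h'-g'$ by $R(t)$ precludes finite-time blow-up of the free boundary; this yields the global solution. Finally, under \textbf{(f4)} the source $f_2(\cdot,x,u,v)$ is Hölder in time and, because $u$ is Lipschitz in $x$ and continuous in $t$, also Hölder in $x$; Schauder theory then upgrades $v$ to $C^{1+\alpha/2,2+\alpha}$ on $(0,\tau]\times[g(t),h(t)]$, and substituting into the free boundary ODEs gives $g,h\in C^{1+\alpha/2}([0,\tau])$.

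The principal difficulty is the asymmetry between the two diffusion mechanisms. The $v$-component enjoys parabolic smoothing, but $u$ has no spatial derivative and only gains the spatial regularity it is handed by the data and by $J$, so the standard closure arguments for two-species local diffusion systems (based on joint $W^{1,2}_p$ estimates) do not apply. Worse, the free boundary condition involves the pointwise value $v_x$ at $x=h(t), g(t)$, which forces one to control $v_x$ up to the lateral boundary despite $v_0$ belonging only to $W^2_p$; this is what dictates $p>3$ and governs the quantitative constant $k_3$. At the same time, one must control the Lipschitz norm of $u(t,\cdot)$ uniformly in $T$ so that the nonlocal contribution $\rho\!\iint J(x-y)u(s,x)\,dy\,dx$ depends Lipschitz-continuously on $(g,h)$, which is essential for closing the outer contraction of $\Gamma$.
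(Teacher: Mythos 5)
Your plan matches the paper's outline at a high level (inner fixed point for $(u,v)$, outer fixed point for $(g,h)$, continuation, Schauder bootstrap), but two of the steps you pass over quickly are precisely where the real difficulties of this mixed nonlocal--local system live, and one of your technical claims is false.

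First, you assert that after the straightening $y=(2x-g(t)-h(t))/(h(t)-g(t))$ the $u$-equation ``remains an integral equation with no spatial derivatives of $u$.'' That is not true: if $w(t,y)=u(t,x(t,y))$, then $w_t=u_t+u_x\,x_t$ with $x_t\ne 0$, so the straightened equation for $w$ contains the advection term $\zeta(t,y)\,w_y$ exactly as the straightened $v$-equation does. Since $u$ (and hence $w$) is only Lipschitz in the spatial variable, this first-order term is genuinely problematic. The paper avoids it by \emph{not} straightening the $u$-equation at all: it keeps $u$ in the $(t,x)$ variables and writes $u(t,x)$ as an integral along the characteristic from the entry time $t_x$ (the first time $x$ lies in the domain). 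You need the same device.

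Second, and more seriously, you claim that $\Gamma$ maps $\mathbb G^T\times\mathbb H^T$ into itself and ``is a contraction in $C^1([0,T])^2$.'' The invariance is the easy part; the contraction on the full set is false. To compare $u_1,u_2$ built over two different pairs $(g_1,h_1)$ and $(g_2,h_2)$, one must control $u_1$ on the sliver where one domain exceeds the other, say $h_2(s)\le x<h_1(s)$. There $u_2\equiv 0$, and the only way to bound $u_1(s,x)$ is to integrate its equation from the entry time $s_1$ with $h_1(s_1)=x$, getting $|u_1(s,x)|\le C(s-s_1)$. To convert $(s-s_1)$ into $\|h_1-h_2\|_{C}$ one needs a \emph{quantitative positive lower bound} on $h_1'$, i.e.\ $h_1'\ge \rho c_0>0$ so that $s-s_1\le (\rho c_0)^{-1}(h_1(s)-h_1(s_1))\le(\rho c_0)^{-1}(h_1(s)-h_2(s))$. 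The definition of $\mathbb H^T$ only requires $h'>0$, with no uniform lower bound, so the contraction estimate degenerates on that set. The paper therefore first proves (using $u_0>0$, the kernel lower bound \eqref{1.5}, and the maximum principle) that $\tilde h'\ge\rho c_0$ and $-\tilde g'\ge\rho c_0^*$ with explicit $c_0,c_0^*>0$, and restricts $\mathcal G$ to the subset $\Gamma_T$ carrying these lower bounds; only there is the map a contraction. This same lower bound is also the engine behind the auxiliary Lipschitz estimate $\|w_1-w_2\|_{C(\Pi_T)}\le C(\|u_1-u_2\|+\|g_1-g_2,h_1-h_2\|)$ (the paper's Lemma \ref{l2.x}), which you need to close the $z$-estimate \eqref{2.22} but which your proposal does not address. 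Finally, note that the Schauder upgrade of $v$ to $C^{1+\alpha/2,2+\alpha}$ under \textbf{(f4)} is not just a cosmetic final step: you need $v(T,\cdot)\in W^2_p$ in order to re-apply the local theorem at time $T$, so the regularity bootstrap must be interleaved with the continuation argument rather than placed after it.
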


For the classical competition and prey-predator models
 \bes
  &&\hspace{-1.3cm}\mbox{\it Competition\, Model}: \ \ \ f_1=u(a-u-bv), \ \ f_2=v(1-v-cu),
  \lbl{1.9}\\[1mm]
 &&\hspace{-1.3cm}\mbox{\it Prey-predator\, Model}: \ f_1=u(a-u-bv), \ \ f_2=v(1-v+cu),
  \lbl{1.10}\ees
the conditions {\bf(f)}--{\bf(f4)} hold, where $a,b,c$ are positive constants.

Due to the presence of the nonlocal diffusion and local diffusion, the methods that solve the local diffusion models are not applicable any more and the arguments for the nonlocal system developed in \cite{DWZ19, CDLL} are far from sufficient for the present stage, the proofs of Theorem \ref{th2.1} are highly non trivial. Our approach to prove Theorem \ref{th2.1} is based on the fixed point theorem. Some new ideas and delicate calculations are given in the proof of Theorem \ref{th2.1}.

The proof of Theorem \ref{th2.1} will be divided into several lemmas because it is too long. Throughout this paper we use $C$, $C'$, $C_i$ and $C_i'$ to represent general constants, which may not be the same in different places.

We first state the following {\it Maximum Principle} which will be used frequently in our analysis.

\begin{lemma}[Maximum Principle {\cite[Lemma 2.2]{CDLL}}]\label{l2.2}
Assume that $J$ satisfies {\bf(J)} and $d$ is a positive constant, and $(r, \eta)\in\mathbb G^T\times\mathbb H^T$. Suppose that $\psi, \psi_t\in C(\overline D^T_{\eta,r})$ and fulfill, for some $\varrho\in L^\infty (D^T_{\eta,r})$,
 \bess\left\{\begin{aligned}
&\psi_t\ge d\int_{\eta(t)}^{r(t)}\!J(x,y)\psi(t,y){\rm d}y-d\psi+\varrho\psi, && (t,x)\in D^T_{\eta,r},\\
& \psi(t, \eta(t)) \geq 0,\ \psi(t, r(t)) \geq 0, && 0\le t\le T,\\
&\psi(0,x)\ge0,  && |x|\le h_0.
 \end{aligned}\right.\eess
Then $\psi\ge0$ on $\overline D^T_{\eta, r}$. Moreover, if $\psi(0,x)\not\equiv0$ in $[-h_0, h_0]$, then $\psi>0$ in $D^T_{\eta,r}$.
\end{lemma}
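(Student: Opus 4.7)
The plan is to obtain the non-negativity assertion by contradiction at an interior minimum after an exponential rescaling, and then upgrade to strict positivity by a propagation argument based on the nondegeneracy $J(0)>0$ contained in {\bf(J)}.

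First I would introduce $w(t,x)=e^{-\lambda t}\psi(t,x)$ with $\lambda>d+\|\varrho\|_{L^\infty(D^T_{\eta,r})}$. A direct substitution gives
\[w_t\ge d\int_{\eta(t)}^{r(t)}\! J(x,y)w(t,y)\,{\rm d}y-dw+(\varrho-\lambda)w\ \ \text{in}\ D^T_{\eta,r},\]
with $w\ge 0$ on the parabolic boundary $\{t=0\}\cup\{x=\eta(t)\}\cup\{x=r(t)\}$. The benefit of this rescaling is that the zeroth-order coefficient $\varrho-\lambda$ is now strictly negative. Suppose, toward a contradiction, that $m:=\min_{\overline{D}^T_{\eta,r}}w<0$; continuity and compactness guarantee the minimum is attained, and the boundary sign conditions force the attainment point $(t_0,x_0)$ to satisfy $t_0\in(0,T]$ and $\eta(t_0)<x_0<r(t_0)$. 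Then $w_t(t_0,x_0)\le 0$ (in the usual sense when $t_0<T$, and as a one-sided limit from the left when $t_0=T$, using $\psi_t\in C(\overline{D}^T_{\eta,r})$). Combining $w(t_0,y)\ge m$ for $y\in[\eta(t_0),r(t_0)]$ with $\int_{\eta(t_0)}^{r(t_0)} J(x_0,y){\rm d}y\le 1$ and $m<0$ yields $\int J(x_0,y)w(t_0,y){\rm d}y\ge m$, so the differential inequality at $(t_0,x_0)$ forces
\[w_t(t_0,x_0)\ge dm-dm+(\varrho(t_0,x_0)-\lambda)m=(\varrho(t_0,x_0)-\lambda)m>0,\]
contradicting $w_t(t_0,x_0)\le 0$. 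Hence $\psi=e^{\lambda t}w\ge 0$ on $\overline{D}^T_{\eta,r}$.

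For the strict positivity assertion, suppose $\psi(0,\cdot)\not\equiv 0$. Once $\psi\ge 0$ is known, the differential inequality collapses to $\psi_t\ge-(d+\|\varrho\|_\infty)\psi+d\int J(x,y)\psi(t,y){\rm d}y$, which, treating $x$ as a parameter, delivers the pointwise Gronwall-type bound $\psi(t,x)\ge\psi(0,x)e^{-(d+\|\varrho\|_\infty)t}$ for every $x\in[-h_0,h_0]\subset[\eta(t),r(t)]$ (the last inclusion holding since $\eta$ is non-increasing and $r$ non-decreasing on $[0,T]$ by the definitions of $\mathbb{G}^T,\mathbb{H}^T$). Thus $\psi>0$ on $[0,T]\times I_0$ for some open $I_0\subset(-h_0,h_0)$ where $\psi(0,\cdot)>0$. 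To spread positivity to an arbitrary $(t_1,x_1)\in D^T_{\eta,r}$, I would invoke \eqref{1.5}: whenever the current positive set meets the $\bar\varepsilon$-neighborhood of a target point $y$, the inequality $J(y,z)\ge\delta_0$ on $|y-z|<\bar\varepsilon$ forces $\int J(y,z)\psi(s,z){\rm d}z>0$ on a time interval, and integrating the differential inequality at $x=y$ produces $\psi(t,y)>0$ for $t>0$. Iterating this $\bar\varepsilon$-step chain finitely many times in $x$ covers $[\eta(t_1),r(t_1)]$ (whose length is bounded on $[0,T]$) and yields $\psi(t_1,x_1)>0$.

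The main obstacle I anticipate is the book-keeping in the positivity-propagation step: the $\bar\varepsilon$-chain must be carried out across time slices of the moving domain $[\eta(t),r(t)]$, so one needs to arrange that at each stage the previously established positive set is available on a time interval containing the relevant $s$. The uniform bounds on $\eta',r'$ over $[0,T]$ keep both the width of the chain and the required time lag controlled, but this is the nontrivial part of the argument; the weak maximum principle itself is essentially mechanical once the $e^{-\lambda t}$ rescaling is in place.
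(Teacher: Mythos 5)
The paper does not give a proof of this lemma; it simply cites \cite[Lemma 2.2]{CDLL}, so there is no in-paper argument to compare against. Your proof is the standard one for nonlocal comparison principles and appears to be correct: the exponential rescaling $w=e^{-\lambda t}\psi$ with $\lambda>d+\|\varrho\|_\infty$ makes the zeroth-order coefficient strictly negative, a negative interior minimum $(t_0,x_0)$ gives $w_t(t_0,x_0)\le 0$ (one-sided in $t$, which is justified because $\psi_t\in C(\overline D^T_{\eta,r})$ and, since $x_0$ is strictly interior and $\eta,r$ are monotone and continuous, $(s,x_0)\in D^T_{\eta,r}$ for $s$ slightly below $t_0$), while $m<0$, $\int_{\eta(t_0)}^{r(t_0)}J(x_0,y)\,dy\le1$ and $J\ge0$ give the opposing strict inequality $w_t(t_0,x_0)\ge(\varrho-\lambda)m>0$. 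The strict-positivity part via Gronwall on the initial positivity set $I_0$ and $\bar\varepsilon$-chaining driven by $J>\delta_0$ on $|x-y|<\bar\varepsilon$ is also sound. Two small points to tighten in a full write-up: the chaining step needs the current positive set to intersect $B_{\bar\varepsilon}(y)$ in a set of positive measure (openness of $I_0$ and of the propagated sets gives this, but it should be said), and for points $x$ with $|x|>h_0$ one must start the Gronwall integration at the entry time $s_x$ (where only $\psi(s_x,x)\ge0$ is known) rather than at $t=0$; the monotonicity of $\eta,r$ then ensures the source points used at times $\tau\in(s_x,t)$ are already inside the domain. These are exactly the book-keeping issues you flagged, and they are genuinely routine; the proposal is sound.
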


\begin{lemma}\label{l2.3}
For any $T>0$ and $(g,h)\in \mathbb G^T\times\mathbb H^T$, the problem
 \bes
\left\{\begin{aligned}
&u_t=d_1\int_{g(t)}^{h(t)}\!J(x,y)u(t,y){\rm d}y-d_1u+f_1(t,x,u,v), &&(t,x)\in D^T_{g,h},\\
&v_t=d_2v_{xx}+f_2(t,x,u,v), &&(t,x)\in D^T_{g,h},\\
&u(t,g(t))=u(t,h(t))=v(t,g(t))=v(t,h(t))=0, &&0\le t\le T,\\
&u(0,x)=u_0(x),\ \ v(0,x)=v_0(x), &&|x|\le h_0
\end{aligned}\right.
 \label{2.3}
 \ees
admits a unique solution $(u_{g,h},v_{g,h})\in \mathbb X^T_{g,h}$, and
$(u_{g,h}, v_{g,h})$ satisfies \qq{2.1} and \qq{2.2}.
Moreover, $v_{g,h}\in W^{1,2}_p(D^T_{g,h})$.
\end{lemma}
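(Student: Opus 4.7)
My plan is to prove Lemma \ref{l2.3} by a Banach fixed point argument on $\mathbb X^T_{g,h}$ for small $T$, followed by extension via uniform a priori bounds. Fix $(g,h) \in \mathbb G^T \times \mathbb H^T$. Given $(\tilde u, \tilde v) \in \mathbb X^T_{g,h}$, I define $\mathcal F(\tilde u, \tilde v) = (u,v)$ where only the cross terms are frozen: $u$ solves the semilinear nonlocal problem
\[u_t = d_1 \int_{g(t)}^{h(t)} J(x,y)\,u(t,y)\,dy - d_1 u + f_1(t,x,u,\tilde v),\quad u(0,x)=u_0(x),\]
and $v$ solves the semilinear parabolic problem
\[v_t = d_2 v_{xx} + f_2(t,x,\tilde u,v),\quad v(t,g(t))=v(t,h(t))=0,\quad v(0,x)=v_0(x).\]
The $u$-equation reduces to a Banach-space ODE through the variation-of-constants formula $u(t,x) = u_0(x)e^{-d_1 t} + \int_0^t e^{-d_1(t-s)}[d_1(J*u)(s,x) + f_1(s,x,u,\tilde v)]\,ds$, which admits a unique solution on $C(\ol{D^T_{g,h}})$ by a contraction on a small time interval. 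For $v$ I flatten the moving domain via $y = (2x - g(t) - h(t))/(h(t) - g(t))$ onto the fixed cylinder $\Pi_T$, obtaining a semilinear parabolic equation with bounded coefficients; $L^p$-parabolic theory ($p>3$) gives a unique $W^{1,2}_p$ solution, and Sobolev embedding then yields $C^{1+\alpha/2,\,1+\alpha}$ regularity.

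Next I would show $\mathcal F$ maps $\mathbb X^T_{g,h}$ into itself. Positivity $u,v > 0$ follows from Lemma \ref{l2.2} and the parabolic maximum principle together with $f_1(t,x,0,v) = f_2(t,x,u,0) = 0$. For the bound $u \le k_1$, I compare with the perturbed supersolution $\bar u(t) = k_1 e^{\epsilon t}$: as soon as $\bar u > k_0$, assumption \textbf{(f1)} makes $f_1(t,x,\bar u,\tilde v) < 0$, whence $\bar u$ is a strict supersolution of the $u$-equation; Lemma \ref{l2.2} applied to $\bar u - u$ and then $\epsilon \to 0$ yields $u \le k_1$. An analogous comparison using \textbf{(f2)} gives $v \le k_2$. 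For the slope bound \qq{2.2}, I would use a local barrier near each free boundary, e.g.\ $\bar v(t,x) = k_2(1 - e^{-k_3(h(t)-x)})$ close to $x = h(t)$; the three summands defining $k_3$ respectively dominate the initial slope of $v_0$ (via $\|v_0'\|_\infty/k_2$), force $\bar v_t - d_2 \bar v_{xx} + L v \ge 0$ (giving the $\sqrt{L/(2d_2)}$ term), and satisfy the geometric bound $k_3 h_0 \ge 1$ at the initial boundary.

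For the contraction itself, \textbf{(f)} together with the integral representation of $u$ and the $L^p$-parabolic estimate for $v$ yields
\[\|u_1 - u_2\|_{C(\ol{D^T_{g,h}})} + \|v_1 - v_2\|_{C(\ol{D^T_{g,h}})} \le C(T)\,\bigl(\|\tilde u_1 - \tilde u_2\|_{C} + \|\tilde v_1 - \tilde v_2\|_{C}\bigr),\]
with $C(T) \to 0$ as $T \to 0^+$. Banach's theorem therefore provides a unique fixed point on $[0, T_*]$ for some $T_* > 0$. Because the bounds $u \le k_1$, $v \le k_2$, and the boundary-slope estimate for $v$ are uniform in time, the solution is continued by restarting the argument with data $(u(T_*,\cdot), v(T_*,\cdot))$; finitely many restarts exhaust the prescribed $T$.

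The main obstacle I anticipate is reconciling the mismatched regularities within a single contraction: the nonlocal $u$-equation provides no smoothing, so $u$ is only as regular in $x$ as $u_0$ and $J$, whereas $v$ lives in $W^{1,2}_p \hookrightarrow C^{1+\alpha/2, 1+\alpha}$. The contraction must therefore be closed in the weakest common space $C(\ol{D^T_{g,h}}) \times C(\ol{D^T_{g,h}})$, and the stronger regularity in \qq{1.6} must be recovered a posteriori: the $C^{1,1-}$ regularity of $u$ by direct inspection of the variation-of-constants formula using $u_0, J \in C^{1-}$, and $v \in W^{1,2}_p$ by a parabolic bootstrap once the fixed point is known. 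A subtler complication is that the change of variables for $v$ produces $t$-dependent coefficients whose $L^\infty$ control relies on $h'(t) \le R(t)$ from the definition of $\mathbb H^T$; the $L^p$-parabolic constants must therefore be taken uniformly over the class $\mathbb G^T \times \mathbb H^T$ in order for the extension step to proceed cleanly.
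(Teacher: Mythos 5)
Your overall strategy matches the paper's: a small-time contraction followed by extension via uniform a priori bounds, with the $v$-equation flattened onto the fixed cylinder $\Pi_s$ and handled by parabolic $L^p$ theory, the nonlocal $u$-equation handled by its variation-of-constants formula, the pointwise bounds $u\le k_1$, $v\le k_2$ obtained by comparison, and the slope bound \eqref{2.2} by local barriers. The genuine difference is the fixed-point decomposition: you freeze both cross terms and iterate on the pair $(\tilde u,\tilde v)\in\mathbb X^T_{g,h}$ (a Jacobi-type scheme), whereas the paper freezes only $\tilde u$, solves the semilinear parabolic $v$-problem to obtain $v=v[\tilde u]$, then solves the semilinear nonlocal $u$-problem with that $v$, and iterates $\mathcal{F}_s:\tilde u\mapsto u$ on $\mathbb X_1^s$ alone (a composed, Gauss--Seidel-type scheme). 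The paper's map acts on the smaller space $C(\overline D^s_{g,h})$ and yields the single estimate $\|u\|\le \tfrac12\|\tilde u\|$; yours requires a product-space estimate, but both close on the same sup-norm and deliver the same fixed point. The quantitative observations at the end of your proposal are exactly what make either version work: the constant in $[\tilde z]_{C^{\alpha/2,\alpha}(\Pi_s)}\le C'\|\tilde z\|_{W^{1,2}_p(\Pi_s)}$ must be taken independent of $s^{-1}$, the smallness of the contraction constant must come from the vanishing initial data of the difference via the factor $T^{\alpha/2}$, and the restart at $T_*$ must not rely on $u_0\in C^{1-}$ or $v_0\in W^2_p$, since $u(T_*,\cdot)$ is only continuous and the flattened $z(T_*,\cdot)$ only $W^1_2$-regular; the paper makes this explicit in its Step 5, and it is worth stating together with the observation that the length of each restart interval depends only on $d_i,k_i,h_0$, so finitely many restarts cover $[0,T]$.
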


\begin{proof}\,{\it Step 1}:\, For $\tilde u\in\mathbb X_1^s$ with $0<s\le T$, consider the following initial-boundary value problem
\bes
\left\{\begin{aligned}
&v_t=d_2v_{xx}+f_2(t,x,\tilde u,v), \ \ &&(t,x)\in D^s_{g,h},\\
&v(t,g(t))=v(t,h(t))=0, &&0\le t\le s,\\
&v(0,x)=v_0(x), &&|x|\le h_0.
\end{aligned}\right.
 \label{2.4}
 \ees
Let $z(t,y)=v(t,x(t,y))$, $\tilde w(t,y)=\tilde u(t,x(t,y))$. It follows from \eqref{2.4} that
 \bes
\left\{\begin{aligned}
&z_t=d_2\xi(t)z_{yy}+\zeta(t,y)z_y+f^*_2(t,y,\tilde w,z), &&0<t\le s,~|y|<1,\\
&z(t,\pm 1)=0, &&0\le t\le s,\\
&z(0,y)=v_0(h_0y)=:z_0(y), &&|y|\le 1,
\end{aligned}\right.
 \label{2.5} \ees
where $f^*_2(t,y,\tilde w,z)=f_2(t,x(t,y),\tilde w,z)$.
Note that $(g,h)\in\mathbb G_{h_0,s}\times\mathbb H_{h_0,s}$, we have $\xi\in C([0,s]),\,\zeta\in C(\Pi_s)$ and
  $$\|\xi\|_{L^\infty((0,s))}\le 1/h_0^2,\ \ \
  \|\zeta\|_{L^\infty(\Pi_s)}\le 2R(s)/h_0\le 2R(T)/h_0.$$
It is easy to see that $\tilde w\in C(\Pi_s)$ and $0\le\tilde w\le k_1$. Notice that $z_0(y)\in\overset{\circ}{W}{}^1_2(\Sigma)$. By the upper and lower solutions method and $L^2$ theory (\cite[Ch.\,III, Theorem 6.1]{L-S-Y1968}) we can show that the problem \eqref{2.5} has a unique solution $z\in W^{1,2}_2(\Pi_s)$, and $z\in C^{\alpha/2,\alpha}(\Pi_s)$ by the embedding theorem. Moreover, $0\le z\le k_2$ in $\Pi_s$ by the weak maximum principle. Hence, the problem \eqref{2.4} admits a unique solution $v\in \mathbb X_2^s$.

\vskip 4pt
{\it Step 2}:\, For $0<s\le T$, let $v$ be the unique solution of \eqref{2.4} and  consider
 \bes
\left\{\begin{aligned}
&u_t=d_1\int_{g(t)}^{h(t)}\!J(x,y) u(t,y){\rm d}y-d_1u+f_1(t,x,u,v), &&(t,x)\in D^s_{g,h},\\
&u(t,g(t))=u(t,h(t))=0, &&0\le t\le s,\\
&u(0,x)=u_0(x), &&|x|\le h_0.
\end{aligned}\right.
 \lbl{2.6} \ees
Thanks to \cite[Lemma 2.3]{CDLL}, this problem admits a unique solution $u$ which satisfies $0<u\le k_1$ for $(t,x)\in[0,s]\times(g(t),h(t))$. It is easily seen that $u\in\mathbb X_1^s$. Define a mapping $\mathcal{F}_s:\mathbb X_1^s\rightarrow\mathbb X_1^s$ by
\[\mathcal{F}_s \tilde u=u.\]
If $\mathcal{F}_s \tilde u=\tilde u$, then $(\tilde u,v)$ solves \eqref{2.3} in $D^s_{g,h}$.

\vskip 4pt{\it Step 3}:\, We shall prove that $\mathcal{F}_s$ has a fixed point in $\mathbb X_1^s$ provided $s$ small enough. Evidently, $\mathbb X_1^s$ is a closed bounded subset of $C(\ol D^s_{g,h})$. Let $\tilde u_1,\tilde u_2\in\mathbb X_1^s$ and $u_i=\mathcal{F}_s \tilde u_i$ with $i=1,2$. Let $v_i$ be the unique solution of \eqref{2.4} with $\tilde u_i$. Then $(u_i,v_i)\in\mathbb X^s$. Notice that $u_i$ satisfies
\bess
\left\{\begin{aligned}
&u_{i,t}=d_1\int_{g(t)}^{h(t)}\!J(x,y)u_i(t,y){\rm d}y-d_1u_i+f_1(t,x,u_i,v_i), &&t_x<t\le s,~g(t)<x<h(t),\\
&u_i(t_x,x)=\tilde u_0(x), &&g(s)<x<h(s),
\end{aligned}\right.
 \eess
where
 \begin{align*}
\tilde u_0(x)=\left\{\begin{aligned}
&0,& &|x|>h_0,\\
&u_0(x),& &|x|\le h_0,
\end{aligned}\right. \quad t_x=\left\{\begin{aligned}
&t_{x,g}& &\mbox{if }\ x\in[g(s),-h_0), \ x=g(t_{x,g}),\\
&0& &\mbox{if } \ |x|\le h_0,\\
&t_{x,h}& &\mbox{if } \ x\in(h_0,h(s)], \ x=h(t_{x,h}),
\end{aligned}
\right.
  \end{align*}
Let $\tilde u=\tilde u_1-\tilde u_2$, $u=u_1-u_2$ and $v=v_1-v_2$, we have
  \bes
\left\{\begin{aligned}
&u_t+a(t,x)u=d_1\int_{g(t)}^{h(t)}\!J(x,y)u(t,y){\rm d}y+b(t,x)v, &&t_x<t\le s,~g(t)<x<h(t),\\
&u(t_x,x)=0, &&g(s)<|x|<h(s),
\end{aligned}\right.
  \label{2.7}\ees
where
\bess
 a(t,x)&=&d_1-\int_0^1f_{1,u}(t,x,u_2+(u_1-u_2)\tau,v_2){\rm d}\tau,\\
 b(t,x)&=&\int_0^1f_{1,v}(t,x,u_1,v_2+(v_1-v_2)\tau){\rm d}\tau.
\eess
Recall {\bf(f)}, there holds that $\|a, b\|_\infty\le d_1+L=:L_1$. It follows from \eqref{2.7} that, for $x\in(g(t),h(t))$ and $t_x<t\le s$,
\bess
u(t,x)=e^{-\int_{t_x}^t a(\tau,x){\rm d}\tau}\int_{t_x}^t e^{\int_{t_x}^l a(\tau,x){\rm d}\tau}\left(d_1\int_{g(l)}^{h(l)}J(x,y)u(l,y){\rm d}y+b(l,x)v(l,x)\right){\rm d}l.
\eess
Due to $(g(t),h(t))\subset(g(s),h(s))$ for $t_x<t\le s$, this implies that
  \bes
|u(t,x)|\le e^{2L_1s}\left(d_1\|u\|_{C(\ol D^s_{g,h})}s+L_1\int_{t_x}^t |v(l,x)|{\rm d}l\right).
 \label{2.8}\ees
Note that $v$ satisfies
\bess
\left\{\begin{aligned}
&v_t=d_2v_{xx}+a_0(t,x)v+b_0(t,x)\tilde u, &&(t,x)\in D^s_{g,h},\\
&v(t,g(t))=v(t,h(t))=0, &&0\le t\le s,\\
&v(0,x)=0, &&|x|\le h_0,
\end{aligned}\right.
 \eess
where
\bess
a_0(t,x)&=&\int_0^1f_{2,v}(t,x,\tilde u_1,v_2+(v_1-v_2)\tau){\rm d}\tau,\\[.1mm]
b_0(t,x)&=&\int_0^1f_{2,u}(t,x,\tilde u_2+(\tilde u_1-\tilde u_2)\tau,v_2){\rm d}\tau.
\eess
Clearly, $\|a_0,b_0\|_\infty\le L$. Let
 \bess
 \tilde w(t,y)=\tilde u(t,x(t,y)),\ \ \tilde z(t,y)=v(t, x(t,y)),\ \
 \tilde a_0(t,y)=a_0(t,x(t,y)),\ \ \tilde b_0(t,y)=b_0(t,x(t,y)).
 \eess
It is easy to see that $\tilde z$ satisfies
\bess
\left\{\begin{aligned}
&\tilde z_t=d_2\xi(t)\tilde z_{yy}+\zeta(t,y)\tilde z_y
+\tilde a_0\tilde z+\tilde b_0\tilde w, &&0<t\le s,~|y|<1,\\
&\tilde z(t,\pm 1)=0, &&0\le t\le s,\\
&\tilde z(0,y)=0, &&|y|\le 1.
\end{aligned}\right.
 \eess
Thanks to the parabolic $L^p$ theory, one can obtain that, with $p>3$ and $\alpha=1-3/p$,
 \[\|\tilde z\|_{W^{1,2}_p(\Pi_s)}\le C\|\tilde w\|_{C(\Pi_s)}
  =C\|\tilde u\|_{C(\ol D^s_{g,h})}.\]
Using the arguments in the proof of \cite[Theorem 1.1]{2-W18} we have
 \bes
[\tilde z,\,\tilde z_y]_{C^{\alpha/2,\alpha}(\Pi_s)}\le C'\|\tilde z\|_{W^{1,2}_p(\Pi_s)}\le C'C\|\tilde u\|_{C(\ol D^s_{g,h})},
 \lbl{2.9}\ees
where $C'$ is independent of $s^{-1}$, and $[\,\cdot\,]_{C^{\frac\alpha 2,\alpha}(\Pi_s)}$ is the H\"{o}lder semi-norm. It follows from $\tilde z(0,y)=0$ that $\|\tilde z\|_{L^\infty(\Pi_s)}\le [\tilde z]_{C^{\alpha/2,\alpha}(\Pi_s)}s^{\alpha/2}$. Thus we have, for $t_x\le t\le s\le 1$,
 \bess
\int_{t_x}^t |v(l,x)|{\rm d}l\le \int_0^s\|\tilde z\|_{L^\infty(\Pi_s)}{\rm d}l\le s[\tilde z]_{C^{\alpha/2,\alpha}(\Pi_s)}
\le sC'C\|\tilde u\|_{C(\ol D^s_{g,h})}.
 \eess
Inserting this into \eqref{2.8} gives
  \[|u(t,x)|\le e^{2L_1s}\big(d_1s\|u\|_{C(\ol D^s_{g,h})}+L_1 C'C s\|\tilde u\|_{C(\ol D^s_{g,h})}\big).\]
Taking $s$ small enough such that
 \[d_1s e^{2L_1s}\le 1/2,\ \ \ L_1C'C s e^{2L_1s}\le 1/4.\]
Then $\|u\|_{C(\ol D^s_{g,h})}\le \frac12\|\tilde u\|_{C(\ol D^s_{g,h})}$.
The contraction mapping theorem shows that $\mathcal{F}_s$ has a unique fixed point $u$ in $\mathbb X_1^s$. Let $z$ be the unique solution of \eqref{2.5} with $\tilde w(t,y)$ replaced by $w(t,y)=u(t,x(t,y))$.

\vskip 4pt{\it Step 4}:\, The local existence and uniqueness of solution $(u,v)$ of \eqref{2.3}. From the above analysis, the function $v(t,x)=z(t,y(t,x))$ solves \qq{2.4} with $\tilde u$ replaced by $u$ and $v\in \mathbb X_2^s$. Hence, $(u,v)\in\mathbb X_{g,h}^s$ solves \eqref{2.3} with $T$ replaced by $s$. Moreover, from the above arguments we know that any solution $(U,V)$ of \eqref{2.3} in $(0, s]$ satisfies $(U,V)\in \mathbb X^s_{g,h}$. Hence, $(u,v)$ is the unique solution of \eqref{2.3} in $(0, s]$.

\vskip 4pt{\it Step 5}:\, We finally show that the unique solution $(u,v)$ of \eqref{2.3} can be extended to $D^T_{g,h}$. It is clear that $u(s,x)\in C([g(s),h(s)])$, $0\le u(s,x)\le k_1$, $0\le v(s,x)\le k_2$ and
 $$u(s,g(s))=u(s,h(s))=v(s,g(s))=v(s,g(s))=0.$$

Same as the above, let $z(t,y)=v(t,x(t,y))$, $w(t,y)=u(t,x(t,y))$. Since $z_0(y)=v_0(h_0y)\in W^2_p(\Sigma)$ and $p>3$, where $\Sigma=[-1,1]$, applying the $L^p$ theory to \qq{2.5} and the uniqueness of weak solution, we have $z\in W^{1,2}_p(\Pi_s)\hookrightarrow C^{(1+\alpha)/2,1+\alpha}(\Pi_s)$. And so $z(s,\cdot)\in \overset{\circ}{W}{}^1_2(\Sigma)$.
Note that in the above Steps $1,2,3$ we only used $u_0\in C([-h_0,h_0])$, $z_0(y)\in\overset{\circ}{W}{}^1_2(\Sigma)$ without using $u_0\in C^{1-} ([-h_0,h_0])$ and  $z_0\in W^2_p(\Sigma)$. We can apply the above Steps $1,2,3$ to \eqref{2.3} but with initial time $t=0$ replaced by $t=s$ to get an $\bar s>s$ and a unique $(\hat u,\hat z)$ which satisfies
 \bess
\left\{\begin{aligned}
&\hat u_t=d_1\int_{g(t)}^{h(t)}\!J(x,y)\hat u(t,y){\rm d}y-d_1\hat u+f_1(t,x,\hat u,\hat v), &&s<t\le\bar s, \ g(t)<x<h(t),\\
&\hat u(t,g(t))=\hat u(t,h(t))=0, &&s\le t\le\bar s,\\
&\hat u(s,x)=u(s,x), &&g(s)\le x\le h(s),
\end{aligned}\right.
  \eess
and
 \bess
\left\{\begin{aligned}
&\hat z_t=d_2\xi(t)\hat z_{yy}+\zeta(t,y)\hat z_y+f^*_2(t,y,\hat w,\hat z), &&s<t\le\bar s,~|y|<1,\\
&\hat z(t,\pm 1)=0, &&s\le t\le\bar s,\\
&\hat z(s,y)=v(s, x(s,y)), &&|y|\le 1
\end{aligned}\right.
  \eess
as well as $\hat u,\hat v\in C([s,\bar s]\times[g(t),h(t)])$, where $\hat v(t,x(t,y))=\hat z(t,y)$, $\hat w(t,y)=\hat u(t,x(t,y))$. Set $u(t,x)=\hat u(t,x), z(t,y)=\hat z(t,y)$ for $t\in[s,\bar s], g(t)\le x\le h(t)$, $|y|\le 1$. Clearly, $u\in C(\ol D^{\bar s}_{g,h})$ solves \qq{2.6} with $(s, v)$ replaced by $(\bar s, v)$, where $v(t,x)=z(t,y(t,x))$; $z$ is a weak solution of \qq{2.5} with $(s,\tilde w)$ replaced by $(\bar s, w)$, where  $w(t,y)=u(t,x(t,y))$. Therefore $(u,v)\in\mathbb X^{\bar s}_{g,h}$ and solves \eqref{2.3} in $(0, \bar s]$. Applying the $L^p$ theory to \qq{2.5} with $(s,\tilde w)$ replaced by $(\bar s, w)$ and the uniqueness of weak solution, we have $z\in W^{1,2}_p(\Pi_{\bar s})\hookrightarrow C^{(1+\alpha)/2, 1+\alpha}(\Pi_{\bar s})$. Hence, $z(\bar s,\cdot)\in \overset{\circ}{W}{}^1_2(\Sigma)$. From the arguments in the above Steps $1,2,3$ we see that $\bar s$ depends only on $d_i,k_i,h_0$, $i=1,2$. By repeating this process finitely many times, the solution $(u,v)$ will be uniquely extended to $D^T_{g,h}$ and $(u,v)\in\mathbb X^T_{g,h}$.

Thanks to Lemma \ref{l2.2}, we have $u>0$ in $D^T_{g,h}$. And, it follows from the parabolic maximum principle for the strong solution that $v>0$ in $D^T_{g,h}$. Hence, we get \eqref{2.1}. Since $v>0$ in $D^T_{g,h}$ and $v(t,h(t))=v(t,g(t))=0$, we have $v_x(t,h(t))<0$ and $v_x(t,g(t))>0$ (see the proof of \cite[Theorem 1.1, pp.2597]{WZ-dcdsa18}). Recall $0\le v\le k_2$ and $f_2(t,x,u,v)\le Lv$. By using the similar arguments in the proof of \cite[Lemma 2.1]{WZjdde17} (cf. \cite[Lemma 2.1]{2-W18}), one can easily show that
 $$0<-v_x(t,h(t)),v_x(t,g(t))\le\max\left\{\frac 1{h_0},\ \ \sqrt{\frac{L}{2d_2}}, \
 \frac {\|v_0'\|_{C([-h_0,h_0])}}{k_2}\right\}=k_3.$$
This implies \eqref{2.2}. In view of \eqref{1.4} and the parabolic $L^p$ theory we have $v\in W^{1,2}_p(D^T_{g,h})$ for all $p>1$. The proof is complete.
\end{proof}

According to Lemma \ref{l2.3}, for any $T>0$ and $(g,h)\in\mathbb G^T\times\mathbb H^T$, there exists a unique $(u,v)=(u_{g,h},v_{g,h})\in\mathbb X_{g,h}^T$ that solves \eqref{2.3}, and \eqref{2.1} holds. For $0<t\le T$, define the mapping
 \[\mathcal{G}(g,h)=(\tilde g,\tilde h)\]
by
 \bess
 \begin{aligned}
&\tilde h(t)=h_0-\mu\int_0^t\!v_x(\tau,h(\tau)){\rm d}\tau+\rho\int_0^t\!\int_{g(\tau)}^{h(\tau)}\!\!\int_{h(\tau)}^\infty\!\!J(x,y)u(\tau,x){\rm d}y{\rm d}x{\rm d}\tau,\\[.1mm]
&\tilde g(t)=-h_0-\mu\int_0^t\!v_x(\tau,g(\tau)){\rm d}\tau-\rho\int_0^t\!\int_{g(\tau)}^{h(\tau)}\!\!\int_{-\infty}^{g(\tau)}\!\!J(x,y)u(\tau,x){\rm d}y{\rm d}x{\rm d}\tau.
\end{aligned}
 \eess
We shall show that $\mathcal{G}$ maps a suitable closed subset $\Gamma_T$ of $\mathbb G^T\times\mathbb H^T$ into itself and is a contraction mapping provided $T$ sufficiently small.

\begin{lemma}\lbl{l.a}\, There exists a closed subset $\Gamma_T\subset\mathbb G^T\times\mathbb H^T$ such that $\mathcal{G}(\Gamma_T)\subset\Gamma_T$.
\end{lemma}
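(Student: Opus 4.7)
The plan is to take $\Gamma_T := \mathbb{G}^T \times \mathbb{H}^T$ itself, viewed as a closed subset of $C^1([0,T]) \times C^1([0,T])$ (relaxing the strict inequality $h'(t) > 0$ to $h'(t) \ge 0$ for topological closedness, since strict positivity will be recovered on the image). For $T$ chosen sufficiently small, I would verify directly that $(\tilde g, \tilde h) = \mathcal{G}(g,h)$ satisfies $\tilde h(0) = h_0$ and $\tilde g(0) = -h_0$ (immediate from the defining integrals), the strict sign conditions $\tilde h'(t) > 0 > \tilde g'(t)$, the two-sided derivative bounds $\tilde h'(t), -\tilde g'(t) \le R(t)$, and $C^1$-regularity.

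Differentiating the integral formula for $\tilde h$ yields
\[\tilde h'(t) = -\mu v_x(t, h(t)) + \rho \int_{g(t)}^{h(t)}\!\!\int_{h(t)}^{\infty}\! J(x-y)\, u(t,x)\, dy\, dx,\]
with an analogous expression for $\tilde g'$. Strict positivity $\tilde h'(t) > 0$ is then immediate from $-v_x(t,h(t)) > 0$ in \eqref{2.2} together with the nonnegativity of the nonlocal term (using $u \ge 0$ and $J \ge 0$). Continuity of $\tilde h'$ in $t$ follows because $v \in W^{1,2}_p(D^T_{g,h})$ with $p>3$ gives, via the straightening change of variables used in Lemma~\ref{l2.3} and the Sobolev embedding $W^{1,2}_p \hookrightarrow C^{(1+\alpha)/2,\,1+\alpha}$, continuity of the $x$-derivative trace along $x=h(t)$; the nonlocal double integral is continuous in $t$ because $u \in C(\overline{D^T_{g,h}})$, the free boundaries are $C^1$, and $J \in C^{1-}$ from (J1). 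Hence $\tilde h \in C^1([0,T])$, and similarly $\tilde g \in C^1([0,T])$.

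The main obstacle is the upper bound $\tilde h'(t) \le R(t)$. Combining $-v_x(t,h(t)) \le k_3$ from \eqref{2.2}, $u \le k_1$ from \eqref{2.1}, and the elementary estimate $\int_{h(t)}^\infty J(x-y)\, dy \le \tfrac{1}{2}$ whenever $x \le h(t)$ (which uses the symmetry of $J$ and $\int_\mathbb{R} J = 1$), I would obtain
\[\tilde h'(t) \le \mu k_3 + \tfrac{\rho k_1}{2}\bigl(h(t)-g(t)\bigr) \le \mu k_3 + \rho k_1 h_0 + \rho k_1 \int_0^t R(\tau)\, d\tau,\]
after using $h(t) - g(t) \le 2h_0 + 2\int_0^t R(\tau)\,d\tau$ from $h, -g \in \mathbb{H}^T$. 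A direct comparison with $R(t) = \mu k_3 + 2(h_0\rho k_1 + \mu k_3)e^{\rho k_1 t}$, which exploits the identity $R'(t) = \rho k_1(R(t) - \mu k_3)$ together with the initial gap $R(0) - (\mu k_3 + \rho k_1 h_0) = h_0\rho k_1 + 2\mu k_3 > 0$, shows that the required inequality holds on $[0,T]$ provided $T \le T_0$ for an explicit $T_0 > 0$ depending only on $\mu, \rho, h_0, k_1, k_3$. The bound for $-\tilde g'$ is symmetric, and closedness of $\Gamma_T$ under uniform $C^1$-convergence is preserved by all pointwise linear constraints in the definition of $\mathbb{G}^T\times\mathbb{H}^T$. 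This completes the plan for $\mathcal{G}(\Gamma_T) \subset \Gamma_T$.
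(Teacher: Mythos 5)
Your computation of the upper bound $\tilde h'(t)\le R(t)$ is essentially sound (you even sharpen the paper's estimate with the factor $\tfrac12$ from symmetry of $J$), and your continuity/regularity reasoning for $\tilde g,\tilde h\in C^1$ is fine. However, your choice $\Gamma_T=\mathbb G^T\times\mathbb H^T$ (or its closure obtained by relaxing $h'>0$ to $h'\ge 0$) has two genuine problems.

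First, the relaxation breaks the stated inclusion: a pair with $h'(t_*)=0$ at some $t_*$ is not in $\mathbb H^T$, so the relaxed set fails the requirement $\Gamma_T\subset\mathbb G^T\times\mathbb H^T$. The paper avoids this by choosing constraints that are both closed and automatically strict: it defines
$\Gamma_T=\{(g,h)\in\mathbb G^T\times\mathbb H^T:\ \rho c_0\le h'(t)\le\br,\ -\br\le g'(t)\le -\rho c_0^*,\ h(T)-g(T)\le M\}$
with $\rho c_0,\rho c_0^*>0$, which is a closed subset of $C^1\times C^1$ and is trivially contained in $\mathbb G^T\times\mathbb H^T$.

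Second, and more importantly, the crucial missing idea is the strictly positive \emph{lower} bound $\tilde h'(t)\ge\rho c_0>0$ and $\tilde g'(t)\le-\rho c_0^*<0$, together with the uniform width bound $h(T)-g(T)\le M$. These lower bounds are obtained by (i) a differential inequality $u_t\ge-(d_1+L)u$ giving $u(t,x)\ge e^{-(d_1+L)t}u_0(x)$, and (ii) the kernel lower bound $J(x,y)>\delta_0$ on $|x-y|<\bar\varepsilon$ from \eqref{1.5}, applied to a small strip of width $\varepsilon_0/4$ near $x=h(t)$ (using \eqref{2.13} to keep that strip inside $[-h_0,h_0]$). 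Without uniform positive lower bounds on $h'$ and $-g'$, the contraction estimate in the next lemma collapses: both Step 3 of Lemma \ref{l2.4} and the proof of Lemma \ref{l2.x} use $|\tau_1-\tau_2|\le(\rho c_0)^{-1}|h_1(\tau_1)-h_1(\tau_2)|$, which needs exactly the bound $h'\ge\rho c_0$ that your proposed $\Gamma_T$ does not provide. So while the bare statement of the lemma could be formally satisfied by a larger set, the chosen $\Gamma_T$ must be restrictive enough to carry the downstream argument, and your set is not.
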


\begin{proof}\, Let $(g,h)\in\mathbb G^T\times\mathbb H^T$. Then $\tilde g,\tilde h\in C^1([0,T])$ and for $0<t\le T$,
 \bess
\begin{aligned}
&\tilde h'(t)=-\mu v_x(t,h(t))+\rho\int_{g(t)}^{h(t)}\!\!\int_{h(t)}^\infty \!J(x,y)u(t,x){\rm d}y{\rm d}x,\\
&\tilde g'(t)=-\mu v_x(t,g(t))-\rho\int_{g(t)}^{h(t)}\!\!\int_{-\infty}^{g(t)}\!J(x,y)u(t,x){\rm d}y{\rm d}x.
\end{aligned}
 \eess
It follows that
 \bes
[\tilde h(t)-\tilde g(t)]'&=&-\mu\big[v_x(t,h(t))-v_x(t,g(t))\big]
+\rho\int_{g(t)}^{h(t)}\!\left[\int_{h(t)}^\infty+\int_{-\infty}^{g(t)}\right]\!J(x,y)u(t,x){\rm d}y{\rm d}x.\qquad
 \label{2.10}\ees
Taking
 \[0<\varepsilon_0<\min\left\{\bar\varepsilon,\ \frac{8\mu k_3}{\rho k_1}\right\},\ \ M=2h_0+\frac{\varepsilon_0}{4}, \ \ 0<T_0\le\frac{\varepsilon_0}{4\big(2\mu k_3+\rho k_1M\big)}\]
such that $h(T_0)-g(T_0)\le M$. Let $\br=\mu k_3+\rho k_1M$. Then, due to \eqref{2.1}, \eqref{2.2} and \eqref{2.10}, we have
 \[[\tilde h(t)-\tilde g(t)]'\le 2\mu k_3+\rho k_1[h(T_0)-g(T_0)]\le 2\mu k_3+\rho k_1M.\]
This implies
 \bes
 \tilde h(t)-\tilde g(t)\le 2h_0+t\big(2\mu k_3+\rho k_1M\big)\le M, \ \ t\in[0,T_0].\lbl{2.11}\ees
Similarly, we can show that
 \bes
 \tilde h'(t)\le \br,\ \ -\tilde g'(t)\le \br, \ \ t\in[0,T_0].
 \lbl{2.12}\ees
It is easily verified that
\bes
h(t)\in[h_0,h_0+\varepsilon_0/4],\ \ g(t)\in[-h_0-\varepsilon_0/4,-h_0],\ \ t\in[0,T_0].\label{2.13}
\ees
Since $(u,v)$ solves \eqref{2.3}, due to {\bf(f)}-{\bf(f2)} and \eqref{2.1} we have
\bess
\left\{\begin{aligned}
&u_t\ge -d_1u-Lu, &&(t,x)\in D^{T_0}_{g,h},\\
&u(t,g(t))=u(t,h(t))=0, &&0\le t\le T_0,\\
&u(0,x)=u_0(x), &&|x|\le h_0,
\end{aligned}\right.
 \eess
which implies that
 \bess
u(t,x)\ge e^{-(d_1+L)t}u_0(x)\ge e^{-(d_1+L)T_0}u_0(x),\ \ t\in(0,T_0], \ |x|\le h_0.
 \eess
This combined with \eqref{1.5} and \eqref{2.13} allows us to derive
\bess
\rho\int_{g(t)}^{h(t)}\!\!\int_{h(t)}^\infty\! J(x,y)u(t,x){\rm d}y{\rm d}x&\ge&\rho\int_{h(t)-\frac{\varepsilon_0}2}^{h(t)}\int_{h(t)}^{h(t)+\frac{\varepsilon_0}2} \!J(x,y)u(t,x){\rm d}y{\rm d}x\\
&\ge&\rho e^{-(d_1+L)T_0}\int_{h_0-\frac{\varepsilon_0}4}^{h_0}\int_{h_0
+\frac{\varepsilon_0}4}^{h_0+\frac{\varepsilon_0}2}\!J(x,y)u_0(x){\rm d}y{\rm d}x\\
&\ge&\frac{\varepsilon_0}{4}\delta_0\rho e^{-(d_1+L)T_0}\int_{h_0-\frac{\varepsilon_0}4}^{h_0}u_0(x){\rm d}x\\
&=:&\rho c_0,\ \ \ t\in(0,T_0].
\eess
Similarly,
\bess
-\rho\int_{g(t)}^{h(t)}\!\!\int_{-\infty}^{g(t)}\!J(x,y)u(t,x){\rm d}y{\rm d}x\le -\frac{\varepsilon_0}{4}\delta_0\rho e^{-(d_1+L)T_0}\int_{-h_0}^{-h_0+\frac{\varepsilon_0}4}\!u_0(x){\rm d}x=:-\rho c^*_0.
\eess
Thus, by \eqref{2.2},
\bes
\tilde h'(t)\ge \rho c_0,\ \ \tilde g'(t)\le-\rho c^*_0, \ \ \ t\in[0,T_0].
\lbl{2.14}\ees
Moreover, by the definitions of $\br,R(t)$ and the choice of $\varepsilon_0$, we know that
 \bess
 \bar R\le \mu k_3+2(h_0\rho k_1+\mu k_3)
\le\mu k_3+2(h_0\rho k_1+\mu k_3)e^{\rho k_1t}=R(t)
 \eess
for all $t\in[0,T_0]$. Noticing that
 $$\rho c_0\le\rho e^{-(d_1+L)T_0}\int_{h_0-\frac{\varepsilon_0}4}^{h_0}\int_{h_0
 +\frac{\varepsilon_0}4}^{h_0+\frac{\varepsilon_0}2}\!J(x,y)u_0(x){\rm d}y{\rm d}x\le \rho h_0 k_1,$$
one has
\[\rho c_0,\ \rho c^*_0\le \rho h_0 k_1< \br\le R(t),\ \ t\in[0,T_0].\]
For $0<T\le T_0$, we define
 \bess
 \Gamma_T=\{(g,h)\in\mathbb G^T\times\mathbb H^T: \rho c_0\le h'(t)\le \br,\ -\br\le g'(t)\le -\rho c^*_0,\ h(T)-g(T)\le M\}.
 \eess
It follows from the above analysis that $\mathcal{G}(\Gamma_T)\subset\Gamma_T$.
\end{proof}

In the following we show that $\mathcal{G}$ is a contraction mapping on $\Gamma_T$ when $T$ is small.

\begin{lemma}\label{l2.4} The mapping $\mathcal{G}$ is contraction on $\Gamma_T$ when $T$ is small.\end{lemma}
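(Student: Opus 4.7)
The plan is as follows. Take any two pairs $(g_i, h_i) \in \Gamma_T$, $i=1,2$, let $(u_i, v_i) = (u_{g_i, h_i}, v_{g_i, h_i})$ be the solutions provided by Lemma~\ref{l2.3}, and set $(\tilde g_i, \tilde h_i) = \mathcal{G}(g_i, h_i)$. Endow $\Gamma_T$ with the norm
\[
\|(g, h)\|_* := \|g'\|_{L^\infty(0, T)} + \|h'\|_{L^\infty(0, T)},
\]
and aim to prove $\|(\tilde g_1 - \tilde g_2, \tilde h_1 - \tilde h_2)\|_* \le \tfrac{1}{2}\|(g_1 - g_2, h_1 - h_2)\|_*$ once $T$ is small. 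Directly from the defining formulas for $\mathcal{G}$, this reduces to estimating the boundary derivative differences $v_{1,x}(t, h_1(t)) - v_{2,x}(t, h_2(t))$ (and its $g$-analogue), and the nonlocal $u$-differences in $(\tilde h_i, \tilde g_i)'$.

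Because $u_i, v_i$ live on different moving cylinders, the natural first step is to straighten each domain onto $\pt$ via $y = (2x - g_i(t) - h_i(t))/(h_i(t) - g_i(t))$, producing $w_i(t,y) = u_i(t, x_i(t,y))$ and $z_i(t,y) = v_i(t, x_i(t,y))$. Writing $\phi = g_1 - g_2$, $\psi = h_1 - h_2$, $w = w_1 - w_2$, $z = z_1 - z_2$, both $w$ and $z$ solve linear problems on $\pt$ with zero initial and lateral data, driven by a forcing that is pointwise linear in $(\phi, \psi, \phi', \psi')$ with coefficients involving the derivatives of $z_2, w_2$. Applying the parabolic $L^p$ theory with $p > 3$ to the straightened $z$-equation exactly as in Step~3 of Lemma~\ref{l2.3}, and using the embedding $W^{1,2}_p(\pt) \hookrightarrow C^{(1+\alpha)/2, 1+\alpha}(\pt)$, yields
\[
\|z\|_{W^{1,2}_p(\pt)} + [z, z_y]_{C^{\alpha/2, \alpha}(\pt)} \le C\bigl(\|w\|_{C(\pt)} + \|(\phi, \psi)\|_*\bigr),
\]
with $C$ independent of $T \le T_0$ by the remark following \eqref{2.9}. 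Since $z(0, \cdot) \equiv 0$, the H\"older estimate gives $\|z\|_{C(\pt)} \le C T^{\alpha/2}(\|w\|_{C(\pt)} + \|(\phi, \psi)\|_*)$. Plugging this into the Volterra identity satisfied by $w$, analogous to \eqref{2.8}, and absorbing the resulting $\|w\|_{C(\pt)}$ factor via the small prefactor $T$, I arrive at
\[
\|w\|_{C(\pt)} + [z_y]_{C^{\alpha/2, \alpha}(\pt)} \le C' \|(\phi, \psi)\|_*.
\]

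With these in hand, the boundary derivative difference is handled through the identity $v_{i,x}(t, h_i(t)) = \tfrac{2}{h_i(t) - g_i(t)} z_{i,y}(t, 1)$: since $z_{1,y}(0, 1) = z_{2,y}(0, 1) = h_0 v_0'(h_0)$, the H\"older bound on $z_y$ combined with $\|\phi\|_\infty, \|\psi\|_\infty \le T\|(\phi, \psi)\|_*$ gives $|v_{1,x}(t, h_1(t)) - v_{2,x}(t, h_2(t))| \le C''(T^{\alpha/2} + T) \|(\phi, \psi)\|_*$. The nonlocal differences split into three contributions: (a) a $w$-difference term controlled by $T\|w\|_{C(\pt)}$, (b) a moving-endpoints term controlled by $\|\phi\|_\infty + \|\psi\|_\infty \le T\|(\phi, \psi)\|_*$ using the uniform bounds on $h_i', g_i'$, and (c) a shift-of-$J$-argument term bounded by $L(J)(\|\phi\|_\infty + \|\psi\|_\infty)$ thanks to \textbf{(J1)}. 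Assembling these estimates for $t \in [0, T]$ I obtain
\[
\|(\tilde g_1 - \tilde g_2, \tilde h_1 - \tilde h_2)\|_* \le C_\star(T^{\alpha/2} + T)\|(\phi, \psi)\|_*,
\]
which is less than $\tfrac12$ for $T$ small.

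The delicate point is the uniform $L^p$ estimate for $z$: the forcing in the $z$-equation contains the coefficients $d_2(\xi_1 - \xi_2) z_{2,yy}$ and $(\zeta_1 - \zeta_2) z_{2,y}$, so its $L^p$-norm can only be controlled through $\|(\phi, \psi)\|_* \cdot \|z_2\|_{W^{1,2}_p(\pt)}$. To close the scheme, one needs the $W^{1,2}_p(\pt)$-bound on $z_2$ from Lemma~\ref{l2.3} to be uniform in $T \le T_0$, which is exactly what is afforded by the $T$-independent embedding constant underlying \eqref{2.9}; keeping careful track of this uniformity is the principal technical obstacle.
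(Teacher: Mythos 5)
Your overall framework---contracting in a $C^1$-type norm, straightening each domain onto $\Pi_T$, applying parabolic $L^p$ theory to the $z$-difference, splitting the nonlocal boundary terms into a $w$-piece, a moving-endpoint piece, and a kernel-shift piece---matches the paper's strategy. The gap is in the step that is supposed to close the loop, namely obtaining $\|w\|_{C(\Pi_T)} \le C'\|(\phi,\psi)\|_*$ from a ``Volterra identity satisfied by $w$, analogous to \eqref{2.8}.''

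No such identity is available. The bound \eqref{2.8} comes from integrating the ODE that $u_1(\cdot,x)-u_2(\cdot,x)$ satisfies for fixed $x$ on a \emph{common} domain. Here $w_i(t,y)=u_i(t,x_i(t,y))$ with different $x_i(t,y)$; to write an ODE for $w_i(\cdot,y)$ you would need $w_{i,t}=u_{i,t}+u_{i,x}\,\partial_t x_i$, but Lemma~\ref{l2.3} only gives $u_i\in C^{1,1-}(\overline D^T_{g_i,h_i})$, so $u_{i,x}$ is an $L^\infty$ function that need not exist along the moving curve $t\mapsto x_i(t,y)$. Consequently $w=w_1-w_2$ does not satisfy a usable ODE, and the claim that $w$ ``solves a linear problem on $\Pi_T$'' with a forcing linear in $(\phi,\psi,\phi',\psi')$ does not hold. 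Trying to absorb $\|w\|_{C(\Pi_T)}$ by a small $T$ prefactor therefore has nothing to absorb from.

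The paper fills this hole with two arguments you have omitted. First, \emph{Step~3 of Lemma~\ref{l2.4}} estimates $\|u\|_{C(\overline\Omega_T)}$ (with $u:=u_1-u_2$ extended by zero) by a case analysis on whether $(s,x)$ lies in $D^T_{g_1,h_1}\setminus D^T_{g_2,h_2}$, in $D^T_{g_2,h_2}\setminus D^T_{g_1,h_1}$, or in the intersection; the first two cases convert the spatial gap between $h_1$ and $h_2$ into a time gap using the lower bound $h_i'\ge\rho c_0$ built into $\Gamma_T$, while the third case integrates the ODE from the last crossing time. Second, \emph{Lemma~\ref{l2.x}} bounds $w(\tau,y)$ by $|u_1(\tau,x_1)-u_1(\tau,x_2)|+\|u\|_{C(\overline\Omega_T)}$ and then controls the spatial modulus of continuity of $u_1$ by integrating its ODE from the two entry times $\tau_1,\tau_2$; this is where the Lipschitz continuity of $J$ from {\bf(J1)}, the Lipschitz continuity of $u_0$ from \eqref{1.4}, and condition {\bf(f3)} enter, together again with $h_1'\ge\rho c_0$. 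Your ``shift-of-$J$'' term (c), estimated by $L(J)(\|\phi\|_\infty+\|\psi\|_\infty)$, is reminiscent of this, but you apply it to the boundary flux integral rather than to the interior modulus of continuity of $u_1$ where it is actually needed. Without Step~3 and Lemma~\ref{l2.x}, the inequality $\|w\|_{C(\Pi_T)}+[z_y]_{C^{\alpha/2,\alpha}(\Pi_T)}\le C'\|(\phi,\psi)\|_*$ is unproven and the contraction does not close.
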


\begin{proof}\, For $(g_i,h_i)\in\Gamma_T$ with $0<T\le \min\{T_0,1\}$, let
\bess
&\Omega_T=D^T_{g_1,h_1}\cup D^T_{g_2,h_2},\ \ u_i=u_{g_i,h_i},\ \ v_i=v_{g_i,h_i},\ \ \mathcal{G}(g_i,h_i)=(\tilde g_i,\tilde h_i),\ \ i=1,2,&\\
&u=u_1-u_2,\ \ v=v_1-v_2,\ g=g_1-g_2,\ \ h=h_1-h_2,\ \ \tilde g=\tilde g_1-\tilde g_2,\ \ \tilde h=\tilde h_1-\tilde h_2.&
\eess
Note that $(u_i,v_i)\in\mathbb X^T_{g_i,h_i}$. By Lemma \ref{l2.3}, $v_i\in W^{1,2}_p(D^T_{g_i,h_i})$ with $p>3$. Make the zero extension of $u_i,v_i$ in $([0,T]\times \mathbb{R})\setminus D^T_{g_i,h_i}$ for $i=1,2$. It is easy to see that
\bes
|\tilde h'(t)|&\le&\mu|v_{1,x}(t,h_1(t))-v_{2,x}(t,h_2(t))|\nonumber\\[1mm]
&&+\rho\left| \int_{g_1(t)}^{h_1(t)}\!\!\int_{h_1(t)}^\infty\!J(x,y)u_1(t,x){\rm d}y{\rm d}x-\int_{g_2(t)}^{h_2(t)}\!\!\int_{h_2(t)}^\infty\!J(x,y)u_2(t,x){\rm d}y{\rm d}x \right|\nonumber\\[1mm]
&=:&\mu \phi_1(t)+\rho \phi_2(t).
 \label{2.15}\ees

{\it Step 1:\,The estimation of $\phi_1(t)$}. It follows from \eqref{2.3} that, for $i=1,2$,
 \bes
\left\{\begin{aligned}
&v_{i,t}=d_2v_{i,xx}+f_2(t,x,u_i,v_i), \ \ &&(t,x)\in D^T_{g_i,h_i},\\
&v_i(t,g_i(t))=v_i(t,h_i(t))=0, &&0\le t\le T,\\
&v_i(0,x)=0, &&|x|\le h_0.
\end{aligned}\right.
 \label{2.16}
 \ees
For $i=1,2$, let
 \[x_i(t,y)=\frac12[(h_i(t)-g_i(t))y+h_i(t)+g_i(t)],\]
and define
 \[w_i(t,y)=u_i(t,x_i(t,y)),\ \ z_i(t,y)=v_i(t,x_i(t,y)),\ \
 f^i_2(t,y,u,v)=f_2(t,x_i(t,y),u,v)\]
for $t\in[0,T],\,y\in\Sigma$ and $u,v\in\mathbb{R}^+$. Then \eqref{2.16} turns into
\bes
\left\{\begin{aligned}
&z_{i,t}=d_2\xi_i(t)z_{i,yy}+\zeta_i(t,y)z_{i,y}+f^i_2(t,y,w_i,z_i), &&0<t\le T,~|y|<1,\\
&z_i(t,-1)=z_i(t,1)=0, &&0\le t\le T,\\
&z_i(0,y)=v_0(h_0y)=:z_0(y), &&|y|\le 1,
\end{aligned}\right.
 \label{2.17}
 \ees
where $\xi_i(t)$ and $\zeta_i(t,y)$ are the same as $\xi(t)$ and $\zeta(t,y)$ in there $g,h$ are replaced by $g_i,h_i$. Making use of $(g_i,h_i)\in\Gamma_T$ and \eqref{2.1}, we have
\bes
\|\xi_i\|_{L^\infty((0,T))}\le 1/h_0^2,\ \ \|\zeta_i\|_{L^\infty(\Pi_T)}\le 2\br/h_0,\ \ \|f^i_2\|_{L^\infty(\Pi_T)}\le C_0\label{2.18}
\ees
for $i=1,2$, where $C_0$ depends only on $k_1,k_2$. By the parabolic $L^p$ theory, $z_i\in W^{1,2}_p(\Pi_T)$ and
 \bes
\|z_i\|_{W^{1,2}_p(\Pi_T)}\le C.\label{2.19}
 \ees
Same as \eqref{2.9} we have $[z_i,\,z_{i,y}]_{C^{\alpha/2,\alpha}(\Pi_T)}\le C_1$,
where $C_1$ is independent of $T^{-1}$. This implies
 \bess
 \|z_{i,y}\|_{C(\Pi_T)}\le \|z_0'(y)\|_{C(\Sigma)}+C_1T^{\alpha/2}
 \le \|z_0'(y)\|_{C(\Sigma)}+C_1.
 \eess
Extend $z_i(t,y)=0$ for $|y|\ge 1$. Then $z_{i,y}\in L^\infty([0,T]\times\mathbb{R})$ and
 \bes
 \|z_{i,y}\|_{L^\infty([0,T]\times\mathbb{R}})\le\|z_0'(y)\|_{C(\Sigma)}+C_1:=C_2.
 \lbl{2.20}\ees

Let $z=z_1-z_2$, $w=w_1-w_2$, $\xi=\xi_1-\xi_2$ and $\zeta=\zeta_1-\zeta_2$. It follows from \eqref{2.17} that
\bes
\left\{\begin{aligned}
&z_t-d_2\xi_1(t)z_{yy}-\zeta_1(t,y)z_{y}-a(t,y)z\\
 &\qquad=d_2\xi(t) z_{2,yy}+\zeta(t,y)z_{2,y}+b(t,y)+c(t,y)w, &&0<t\le T,~|y|<1,\\
&z(t,\pm 1)=0, &&0\le t\le T,\\
&z(0,y)=0, &&|y|\le 1,
\end{aligned}\right.
 \label{2.21}
 \ees
where
\bess
 a(t,y)&=&\dd\int_0^1f^1_{2,v}(t,y,w_1,z_2+(z_1-z_2)\tau){\rm d}\tau,\\[1mm] b(t,y)&=&f^1_2(t,y,w_1,z_2)-f^2_2(t,y,w_1,z_2),\\[1mm]
 c(t,y)&=&\dd\int_0^1f^2_{2,u}(t,y,w_2+(w_1-w_2)\tau,z_2){\rm d}\tau.
\eess
Note that $(g_i,h_i)\in\Gamma_T$. It follows that
 \bess
\|\xi\|_{L^\infty((0,T))}\le\frac A{h_0^4}\|g,\,h\|_{C([0,T])},\ \
\|\zeta\|_{L^\infty(\Pi_T)}\le\df{\br+A}{h_0^2}\|g,\,h\|_{C^1([0,T])}
 \eess
with $A=h_0+\varepsilon_0/4$, and
 \bess
 \|a,\,c\|_{L^\infty(\Pi_T)}\le L, \ \ \|b\|_{L^\infty(\Pi_T)}\le L^*\|g,\,h\|_{C([0,T])}.
  \eess
Recall \eqref{2.18}, \eqref{2.19}, applying the parabolic $L^p$ theory to \eqref{2.21}, one can obtain
 \bess
\|z\|_{W^{1,2}_p(\Pi_T)}\le C_3\big(\|g,\,h\|_{C^1([0,T])}+\|w\|_{C(\Pi_T)}\big),
 \eess
where $C_3$ depends on $h_0,\br,k_1,k_2,k_3,\varepsilon_0$. Same as \eqref{2.9}, one has
 \bes
[z]_{C^{\alpha/2,\alpha}(\Pi_T)}+[z_y]_{C^{\alpha/2,\alpha}(\Pi_T)}
&\le& C_4\big(\|g,\,h\|_{C^1([0,T])}+\|w\|_{C(\Pi_T)}\big),
 \lbl{2.22}\ees
where $C_4>0$ is independent of $T^{-1}$. We claim that, for $T$ small enough,
 \bes
 \|w\|_{C(\Pi_T)}\le C\big(\|u\|_{C(\overline\Omega_T)}+\|g,h\|_{C([0,T])}\big).
 \label{2.23}
 \ees
Because the proof of \eqref{2.23} is very long, it will be treated as a separate lemma (Lemma \ref{l2.x}). It follows from \eqref{2.22} and \eqref{2.23} that
 \bes
[z]_{C^{\alpha/2,\alpha}(\Pi_T)}+[z_y]_{C^{\alpha/2,\alpha}(\Pi_T)}
 \le C_5\big(\|g,\,h\|_{C^1([0,T])}+\|u\|_{C(\overline\Omega_T)}\big),
 \label{2.24}\ees
Noticing $z_{y}(0,1)=0$. One has, by \eqref{2.24},
 \bes
 |z_y(t,1)|_{C([0,T])}\le C_5T^{\alpha/2}\big(\|g,\,h\|_{C^1([0,T])}
 +\|u\|_{C(\overline\Omega_T)}\big).
 \lbl{2.25}\ees
As $h(0)=g(0)=0$, it is easy to see that
 \bes
|h(t)|\le t\|h'\|_{C([0,T])}\le t\|h\|_{C^1([0,T])},\ \ \ |g(t)|\le t\|g\|_{C^1([0,T])}.
 \label{2.26} \ees
As $v_{i,x}(t,h_i(t))=\frac{2z_{i,y}(t,1)}{h_i(t)-g_i(t)},\ i=1,2$,
it follows from \eqref{2.2} that $|z_{2,y}(t,1)|\le k_3M/2:=B$. Making use of \eqref{2.25} and \eqref{2.26} we have
\bes
 \phi_1(t)&=&|v_{1,x}(t,h_1(t))-v_{2,x}(t,h_2(t))|\nonumber\\[2mm]
 &=&\left|\df{2[z_{1,y}(t,1)-z_{2,y}(t,1)]}{h_1(t)-g_1(t)}
+2z_{2,y}(t,1)\df{g(t)-h(t)}{[h_1(t)-g_1(t)][h_2(t)-g_2(t)]}\right|\nonumber\\[1mm]
&\le&\df{1}{h_0}|z_y(t,1)|+2|z_{2,y}(t,1)|\df{|h(t)|+|g(t)|}{4h_0^2}\nonumber\\[1mm]
&\le&\df{1}{h_0}|z_y(t,1)|+2|z_{2,y}(t,1)|\df{t\|h\|_{C^1([0,T])}
+t\|g\|_{C^1([0,T])}}{4h_0^2}\nonumber\\[1mm]
&\le&\df{1}{h_0}C_5T^{\alpha/2}\big(\|g,\,h\|_{C^1([0,T])}
 +\|u\|_{C(\overline\Omega_T)}\big)+\df{B}{2h_0^2}T\|g,\,h\|_{C^1([0,T])}\nonumber\\[1mm]
 &\le& C_6T^{\alpha/2}\big(\|g,\,h\|_{C^1([0,T])}+\|u\|_{C(\bar\Omega_T)}\big).
\label{2.27}
\ees

{\it Step 2:\,The estimation of $\phi_2(t)$}. Inspiring by the arguments in \cite{CDLL, DWZ19} and using \eqref{2.26} we have
 \bes
\phi_2(t)&=&\left| \int_{g_1(t)}^{h_1(t)}\!\!\int_{h_1(t)}^\infty\! J(x,y)u_1(t,x){\rm d}y{\rm d}x-\int_{g_2(t)}^{h_2(t)}\!\!\int_{h_2(t)}^\infty\! J(x,y)u_2(t,x){\rm d}y{\rm d}x\right|\nonumber\\[1mm]
&\le&\int_{g_1(t)}^{h_1(t)}\!\!\int_{h_1(t)}^\infty\!J(x,y)|u(t,x)|{\rm d}y{\rm d}x\nonumber\\[1mm]
&&+\left|\left(\int_{g_1(t)}^{g_2(t)}\!\!\int_{h_1(t)}^\infty+\int_{h_2(t)}^{h_1(t)}
\!\!\int_{h_1(t)}^\infty
+\int_{g_2(t)}^{h_2(t)}\!\!\int_{h_1(t)}^{h_2(t)}\right)J(x,y)u_2(t,x){\rm d}y{\rm d}x\right|\nonumber\\[1mm]
&\le&3h_0\|u\|_{C(\bar\Omega_T)}+k_1\|g\|_{C([0,T])}+2k_1\|h\|_{C([0,T])}\nonumber\\[1mm]
 &\le& C_7\big(\|u\|_{C(\bar\Omega_T)}+T\|g,\,h\|_{C^1([0,T])}\big).
 \label{2.28}
\ees

{\it Step 3:\, The estimation of $\|u\|_{C(\bar\Omega_T)}$}. Fixed $(s,x)\in\Omega_T$.

Case 1: $x\in(g_1(s),h_1(s))\setminus(g_2(s),h_2(s))$. In this case, either $g_1(s)<x\le g_2(s)$ or $h_2(s)\le x<h_1(s)$ and $u_2(s,x)=v_2(s,x)=0$. For $h_0<h_2(s)\le x<h_1(s)$, there is $0<s_1<s$ such that $x=h_1(s_1)$. Clearly, $h_2(t)\le h_2(s)\le x=h_1(s_1)<h_1(s)$ and $g_1(t)<h_1(s_1)=x\le h_1(t)$ for $t\in[s_1,s]$. Hence, $u_2(t,x)=0$ for $t\in[s_1,s]$ and $u_1(s_1,x)=0$. Integrating the equation of $u_1$ from $s_1$ to $s$ gives
 \bess
|u(s,x)|&=&u_1(s,x)=\int_{s_1}^s \left(d_1\int_{g_1(t)}^{h_1(t)}\!J(x,y)u_1(t,y){\rm d}y-d_1u_1+f_1(t,x,u_1,v_1) \right){\rm d}t\\[1mm]
&\le&(s-s_1)(d_1+L)k_1\\[1mm]
&\le&(\rho c_0)^{-1}[h_1(s)-h_1(s_1)](d_1+L)k_1\\[1mm]
&\le&(\rho c_0)^{-1}(d_1+L)k_1[h_1(s)-h_2(s)]\\[1mm]
&\le&C_8\|h_1-h_2\|_{C([0,T])}.
\eess
When $g_1(s)<x\le g_2(s)$, by using the similar arguments, it is easy to derive that $|u(s,x)|=u_1(s,x)\le C_8'\|g\|_{C([0,s])}$. Therefore,
$|u(s,x)|\le C_9\|g,\,h\|_{C([0,s])}$ with $C_9=\max\{C_8,C_8'\}$.
This combined with \eqref{2.26} allows us to derive
 \bes
 |u(s,x)|\le C_9T\|g,\,h\|_{C^1([0,s])}.
 \label{2.29}\ees

Case 2: $x\in(g_2(s),h_2(s))\setminus(g_1(s),h_1(s))$. Parallel to the case 1 we have \eqref{2.29}.

Case 3: $x\in(g_1(s),h_1(s))\cap(g_2(s),h_2(s))$. If $x\in(g_1(t),h_1(t))\cap(g_2(t),h_2(t))$ for all $0<t<s$, then
 \bes
u_t(t,x)&=&u_{1t}(t,x)-u_{2t}(t,x)\nonumber\\
&=&d_1\int_{g_1(t)}^{h_1(t)}J(x,y)u(t,y){\rm d}y+d_1\left(\int_{g_1(t)}^{g_2(t)}
 +\int_{h_2(t)}^{h_1(t)}\right)J(x,y)u_2(t,y){\rm d}y\nonumber\\[1mm]
&&-d_1u(t,x)+f_1(t,x,u_1,v_1)-f_1(t,x,u_2,v_2).
  \label{2.30}\ees
Notice that
\[|f_1(t,x,u_1,v_1)-f_1(t,x,u_2,v_2)|\le L(|u|+|v|),\]
and $u(0,x)=u_1(0,x)-u_2(0,x)=0$. Integrating \eqref{2.30} from $0$ to $s$ yields
 \bes
|u(s,x)|&\le& T\big((2d_1+L)\|u\|_{C(\bar\Omega_T)}+d_1k_1\|J\|_\infty\|g,\,h\|_{C([0,s])}\big)
+L\int_0^s|v(t,x)|{\rm d}t\nonumber\\
 &\le& TC_{10}\big(\|u\|_{C(\bar\Omega_T)}+\|g,\,h\|_{C^1([0,T])}\big)
+L\int_0^s|v(t,x)|{\rm d}t.\label{2.31}
\ees

If there is $0<t<s$ such that $x\notin(g_1(t),h_1(t))\cap(g_2(t),h_2(t))$, then we can choose the largest $t_0\in(0,t)$ such that
  \bes
x\in(g_1(t),h_1(t))\cap(g_2(t),h_2(t)),\ \ \forall\ t_0<t\le s,
 \label{2.32}\ees
and
\bess
x\in(g_1(t_0),h_1(t_0))\setminus(g_2(t_0),h_2(t_0)),\ \ {\rm or}\ \ x\in(g_2(t_0),h_2(t_0))\setminus(g_1(t_0),h_1(t_0)).
\eess
It follows from the conclusions of Case 1 and Case 2 that $|u(t_0,x)|\le C_9\|g,\,h\|_{C([0,s])}$. Thus,
 \[|u(t_0,x)|\le C_9s\|g,\,h\|_{C^1([0,s])}\le C_9T\|g,\,h\|_{C^1([0,T])}\]
by \eqref{2.26}. Note that \eqref{2.30} holds for any $t_0<t\le s$ due to \eqref{2.32}. Integrating \eqref{2.30} from $t_0$ to $s$ we have
  \bes
|u(s,x)|&\le& |u(t_0,x)|+T\big((2d_1+L)\|u\|_{C(\bar\Omega_T)}+d_1k_1\|J\|_\infty\|g,\,h\|_{C([0,s])}\big)
+L\int_{t_0}^s|v(t,x)|{\rm d}t\nonumber\\
&\le& C_{11}T\big(\|u\|_{C(\bar\Omega_T)}+\|g,\,h\|_{C^1([0,T])}\big)
+L\int_{t_0}^s|v(t,x)|{\rm d}t.\label{2.33}
 \ees

Now we estimate $\dd\int_{t_0}^s|v(t,x)|{\rm d}t$ and $\dd\int_0^s|v(t,x)|{\rm d}t$.
Let
 \[y_i=y_i(t,x)=\frac{2x-h_i(t)-g_i(t)}{h_i(t)-g_i(t)},\ \ i=1,2.\]
Then
 \[x=\frac{(h_i(t)-g_i(t))y_i+h_i(t)+g_i(t)}2,\]
and due to \eqref{2.32} we have $y_i(t,x)\in\Sigma
$. Moreover,
 \bes
\|y_1(\cdot,x)-y_2(\cdot,x)\|_{C([t_0,s])}
\le\frac{ h_0+\varepsilon_0/4}{h_0^2}\|g,\,h\|_{C([0,T])}=\frac{ A}{h_0^2}\|g,\,h\|_{C([0,T])}.
 \label{2.34}\ees
Clearly, $z_i(t,y_i)=v_i(t,x)$ for $t_0<t\le s$. Note that $z(0,y)=z_1(0,y)-z_2(0,y)=0$, we have that, for any $(t,y)\in\Pi_T$,
 \[|z(t,y)|=|z(t,y)-z(0,y)|\le t^{\alpha/2}[z]_{C^{\alpha/2, \alpha}(\Pi_T)}.\]
And so $\|z\|_{C(\Pi_T)}\le T^{\alpha/2}[z]_{C^{\alpha/2, \alpha}(\Pi_T)}$. Thanks to \eqref{2.20}, \eqref{2.24} and \eqref{2.34}, it educes that
 \bes
\int_{t_0}^s|v(t,x)|{\rm d}t&=&\int_{t_0}^s|z_1(t,y_1)-z_2(t,y_2)|{\rm d}t\nonumber\\[1mm]
&\le&\int_{t_0}^s|z_1(t,y_1)-z_2(t,y_1)|{\rm d}t+\int_{t_0}^s|z_2(t,y_1)-z_2(t,y_2)|{\rm d}t\nonumber\\[1mm]
&\le&T\|z\|_{C(\Pi_T)}+\int_{t_0}^s|y_1-y_2|\|z_{2,y}\|_{L^\infty([0,T]\times\mathbb{R})}{\rm d}t\nonumber\\[1mm]
 &\le&T\|z\|_{C(\Pi_T)}+T\|y_1-y_2\|_{C([t_0,s])}\|z_{2,y}\|_{L^\infty([0,T]\times\mathbb{R})}\nonumber\\[1mm]
&\le&C_5T^{1+\alpha/2}\big(\|g,\,h\|_{C^1([0,T])}+\|u\|_{C(\overline\Omega_T)}\big)
+\frac{A C_2}{h_0^2}T\|g,\,h\|_{C([0,T])}\nonumber\\[1mm]
 &\le& C_{12}T\big(\|g,\,h\|_{C^1([0,T])}+\|u\|_{C(\overline\Omega_T)}\big).
 \lbl{2.35}\ees
Similarly, one can find $C_{13}>0$ such that
 \bes
\int_0^s|v(t,x)|{\rm d}t\le C_{13}T\big(\|g,\,h\|_{C^1([0,T])}+\|u\|_{C(\overline\Omega_T)}\big).
 \lbl{x}\ees
Substituting the estimations \qq{2.35} and \qq{x} into \eqref{2.33} and \eqref{2.31}, respectively,
we have
  \bes
|u(s,x)|\le C_{14}T\big(\|g,\,h\|_{C^1([0,T])}+\|u\|_{C(\overline\Omega_T)}\big).
 \lbl{2.36}\ees
The estimates \eqref{2.29} and \eqref{2.36} show that, for any case, the following holds:
 \bess
|u(s,x)|\le C_{14}'T\big(\|g,\,h\|_{C^1([0,T])}+\|u\|_{C(\overline\Omega_T)}\big).
 \eess
The arbitrariness of $(s,t)\in\Omega_T$ implies
 \bes
\|u\|_{C(\overline\Omega_T)}\le 2C_{14}'T\|g,\,h\|_{C^1([0,T])}.
 \lbl{2.37}\ees
provided $C_{14}'T\le1/2$.

\vskip 4pt{\it Step 4}:\, Inserting \eqref{2.37} into \eqref{2.27}, \eqref{2.28} we get
 \bess
\mu \phi_1(t)+\rho \phi_2(t)&\le&C_{15}T^{\alpha/2}\|g,\,h\|_{C^1([0,T])},
\ \ \forall \ 0<t\le T.
 \eess
This combined with \eqref{2.15} implies
  \[|\tilde h'(t)|\le C_{15}T^{\alpha/2}\|g,\,h\|_{C^1([0,T])},\ \ \forall \ 0<t\le T.\]
Similarly,
  \[|\tilde g'(t)|\le C_{16}T^{\alpha/2}\|g,\,h\|_{C^1([0,T])},\ \ \forall \ 0<t\le T.\]
Moreover, as $\tilde g(0)=\tilde h(0)=0$, it is easy to deduce that
 \bess
 \|\tilde g(t),\,\tilde h(t)\|_{C^1([0,T])}
\le2(C_{15}+C_{16})T^{\alpha/2}\|g,\,h\|_{C^1([0,T])}
\le \df12\|g,\,h\|_{C^1([0,T])}
 \eess
when $T$ is small. Hence, $\mathcal{G}$ is a contraction mapping on $\Gamma_T$ when $T$ is small.\end{proof}

\begin{lemma}\lbl{l2.x}\,The estimate \eqref{2.23} holds.
\end{lemma}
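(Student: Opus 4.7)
The plan is to decompose $w=w_1-w_2$ on the fixed reference cylinder $\Pi_T$ and reduce \eqref{2.23} to a uniform spatial Lipschitz estimate on the nonlocal component $u_i$. Using the zero extension of $u_i,v_i$ to $[0,T]\times\mathbb{R}$ introduced just before Lemma \ref{l2.4}, I write for each $(t,y)\in\Pi_T$
\bess
w(t,y) &=& u_1(t, x_1(t,y)) - u_2(t, x_2(t,y))\\
&=& u(t, x_1(t,y)) + \bigl[u_2(t, x_1(t,y)) - u_2(t, x_2(t,y))\bigr].
\eess
Since $x_1(t,y)\in[g_1(t),h_1(t)]\subset\overline\Omega_T$, the first summand is automatically bounded by $\|u\|_{C(\overline\Omega_T)}$. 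For the second, the explicit formula for $x_i(t,y)$ gives $|x_1(t,y)-x_2(t,y)|\le|h(t)|+|g(t)|\le 2\|g,h\|_{C([0,T])}$, so it suffices to establish a uniform spatial Lipschitz bound
\bess
|u_i(t,x) - u_i(t,x')| \le K|x-x'|,\quad t\in[0,T],\ x,x'\in\mathbb{R},
\eess
with $K$ depending only on the data and $T_0$, not on the particular pair $(g_i,h_i)\in\Gamma_T$.

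This Lipschitz estimate is the heart of the argument, and I would obtain it by Gronwall on the integral representation of the nonlocal equation. Viewing the equation for $u_i$ as a linear first-order ODE in $t$ at each fixed $x$, integration from the first entry time $t^i_x$ (defined by $t^i_x=0$ for $|x|\le h_0$ and by $h_i(t^i_x)=x$ or $g_i(t^i_x)=x$ otherwise) yields
\bess
u_i(t,x)=e^{-d_1(t-t^i_x)}\tilde u_0(x)+\int_{t^i_x}^t e^{-d_1(t-s)}\Bigl[d_1\!\int_{g_i(s)}^{h_i(s)}\!\!J(x-z)u_i(s,z){\rm d}z+f_1(s,x,u_i,v_i)\Bigr]{\rm d}s.
\eess
Subtracting this identity at $x$ and $x'$ and using (i) the Lipschitz continuity of $u_0$ from \eqref{1.4} and of $J$ from {\bf(J1)}, (ii) the Lipschitz structure of $f_1$ from {\bf(f)} and {\bf(f3)}, (iii) the uniform bound on $\|v_{i,x}\|_{L^\infty}$ obtainable from \eqref{2.20} after reverting to original coordinates, and (iv) the lower bounds $h_i'(t)\ge\rho c_0$ and $-g_i'(t)\ge\rho c^*_0$ guaranteed on $\Gamma_T$ (which control $|t^i_x-t^i_{x'}|$ by a constant multiple of $|x-x'|$), one arrives at an integral inequality of Gronwall type
\bess
|u_i(t,x)-u_i(t,x')|\le C|x-x'|+L\int_0^t|u_i(s,x)-u_i(s,x')|{\rm d}s,
\eess
yielding the claimed Lipschitz bound. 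Three subcases must be examined: both points in $[-h_0,h_0]$, both on the same side outside $[-h_0,h_0]$, and the mixed case; Lipschitz continuity across the moving boundary is then automatic from the interior estimate together with $u_i(t,h_i(t))=u_i(t,g_i(t))=0$.

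Combining the two pieces of the initial decomposition gives $\|w\|_{C(\Pi_T)}\le\|u\|_{C(\overline\Omega_T)}+2K\|g,h\|_{C([0,T])}$, which is exactly \eqref{2.23} up to renaming the constant. The main obstacle is the mixed subcase in the Gronwall argument, in which one of $x,x'$ lies in $[-h_0,h_0]$ (with $t^i_x=0$ and initial value $u_0(x)$) while the other has $t^i_{x'}>0$ (with vanishing initial value). Absorbing the resulting ``initial data mismatch'' $e^{-d_1 t}u_0(x)$ into $C|x-x'|$ requires the compatibility condition $u_0(\pm h_0)=0$ from \eqref{1.4} together with the lower bounds on $h_i'$ and $-g_i'$ on $\Gamma_T$: these give $|u_0(x)|=|u_0(x)-u_0(\pm h_0)|\le L(u_0)|x-x'|$ and $t^i_{x'}\le(\rho c_0)^{-1}|x-x'|$ respectively, so that the extra $\int_0^{t^i_{x'}}$ boundary contribution is linear in $|x-x'|$ as required.
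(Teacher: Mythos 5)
Your proof is correct and follows essentially the same route as the paper's: the same decomposition of $w$ into $\|u\|_{C(\overline\Omega_T)}$ plus a spatial increment of a single $u_i$, the same integral representation from the first entry time split into the same three boundary cases, and the same use of the Lipschitz continuity of $u_0$, $J$, $f_1$, the bound \eqref{2.20} on $z_{i,y}$, and the lower bounds on $h_i'$, $-g_i'$ in $\Gamma_T$ to control the entry-time mismatch. Phrasing the key step as a uniform spatial Lipschitz bound for $u_i$ rather than an estimate tied to the specific pair $x_1(\tau,y),x_2(\tau,y)$ is only a cosmetic difference; the resulting Gronwall inequality is closed by taking $T$ small exactly as in the paper.
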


\begin{proof}\, To save space, let's assume $d_1=1$ here.
For the fixed $(\tau,y)\in\Pi_T$, we set
 \[x_i=x_i(\tau,y)=\frac12[(h_i(\tau)-g_i(\tau))y+g_i(\tau)+h_i(\tau)],\ \ i=1,2.\]
Then, $w_i(\tau,y)=u_i(\tau,x_i)$, $x_i\in[g_i(\tau),h_i(\tau)]$. The direct calculation yields
 \bes
x_1-x_2=\df{(h_2(\tau)-x_2)(g_1(\tau)-g_2(\tau))}{h_2(\tau)-g_2(\tau)}
+\df{(x_2-g_2(\tau))(h_1(\tau)-h_2(\tau))}{h_2(\tau)-g_2(\tau)},
 \label{2.38}\ees
which combined with the definition of $\Gamma_T$ and \eqref{2.13} implies
  \[|x_1-x_2|\le\df{M\varepsilon_0}{4h_0}\le\df{3\varepsilon_0}{4}<h_0.\]
Hence, one of the following four cases must happen:
\[x_1,x_2\in[-h_0,h_1(\tau)]; \ \ x_1,x_2\in[-h_0,h_2(\tau)]; \ \ x_1,x_2\in[g_1(\tau),h_0]; \ \ x_1,x_2\in[g_2(\tau),h_0].\]
Without loss of generality we may suppose that $h_1(\tau)\ge h_2(\tau)$ and $x_1,x_2\in[-h_0,h_1(\tau)]$. For other cases, one can handle by the same way.
Similar to Step 3 in the proof of Lemma \ref{l2.3}, for this fixed $\tau$ and any $x\in[g_1(\tau),h_1(\tau)]$, we define
 \begin{align*}
\tau_x=&\;\left\{\begin{aligned}
&\tau_{x,g_1}& &\mbox{ if }\ x\in[g_1(\tau),-h_0), \ x=g_1(\tau_{x,g_1}),\\
&0& &\mbox{ if } \ |x|\le h_0,\\
&\tau_{x,h_1}& &\mbox{ if } \ x\in(h_0,h_1(\tau)], \ x=h_1(\tau_{x,h_1}).
\end{aligned}
\right.
\end{align*}
As $x_i\in[-h_0,h_1(\tau)]$, we have $\tau_{x_i}=\tau_{x_i,h_1}$ or $\tau_{x_i}=0$, and $0\le \tau_{x_i}\le \tau$, $i=1,2$. It is easy to get
 \bes
|w_1(\tau,y)-w_2(\tau,y)|&\le&|u_1(\tau,x_1)-u_1(\tau,x_2)|+|u_1(\tau,x_2)-u_2(\tau,x_2)|\nonumber\\[1mm]
&\le&|u_1(\tau,x_1)-u_1(\tau,x_2)|+\|u\|_{C(\overline\Omega_T)}.
 \lbl{2.39}\ees

We estimate $|u_1(\tau,x_1)-u_1(\tau,x_2)|$. Integrating the differential equation of $u_1$ from $\tau_{x}$ to $\tau$ gives
 \bess
u_1(\tau,x)&=&u_1(\tau_{x},x)+\int_{\tau_{x}}^\tau
\left(\int_{g_1(s)}^{h_1(s)}\!\!J(x,y)u_1(s,y){\rm d}y-u_1(s,x)+ f_1(s,x,u_1,v_1)\right){\rm d}s.
 \eess
Denote $\tau_i=\tau_{x_i}$, $i=1,2$. Then $\tau_i$ depends on $x_i$. Without loss of generality we assume $\tau_1\ge \tau_2$. Thus, for $\tau_1\le t\le \tau$,
  \bess
|u_1(t,x_1)-u_1(t,x_2)|
&\le&|u_1(\tau_1,x_1)-u_1(\tau_2,x_2)|+\! \int_{\tau_1}^t\!\int_{g_1(s)}^{h_1(s)}\!|J(x_1,y)-J(x_2,y)|u_1(s,y){\rm d}y{\rm d}s\\[1mm]
&&+\int_{\tau_2}^{\tau_1}\!\!\int_{g_1(s)}^{h_1(s)}\!\!J(x_2,y)u_1(s,y){\rm d}y{\rm d}s
+\int_{\tau_1}^t|u_1(s,x_1)-u_1(s,x_2)|{\rm d}s\\[1mm]
&& +\int_{\tau_2}^{\tau_1}u_1(s,x_2){\rm d}s+\int_{\tau_2}^{\tau_1}|f_1(s,x_2,u_1(s,x_2),v_1(s,x_2))|{\rm d}y{\rm d}s\\[1mm]
&&+\int_{\tau_1}^t|f_1(s,x_1,u_1(s,x_1),v_1(s,x_1))
-f_1(s,x_2,u_1(s,x_2),v_1(s,x_2))|{\rm d}y{\rm d}s.
 \eess
It follows from the conditions {\bf(f)} and {\bf(f3)} that
 \bess
 &&|f_1(s,x_2,u_1(s,x_2),v_1(s,x_2))|\le L|u_1(s,x_2)|\le Lk_1,\\[1mm]
 &&|f_1(s,x_1,u_1(s,x_1),v_1(s,x_1))-f_1(s,x_2,u_1(s,x_2),v_1(s,x_2))|\\[1mm]
 &\le& L^*|x_1-x_2|+L\big(|u_1(s,x_1)-u_1(s,x_2)|+|v_1(s,x_1)-v_1(s,x_2)|\big).
 \eess
 As $g_i,h_i$ satisfy \eqref{2.11}, i.e., $h_i(\tau)-g_i(\tau)\le M$ in $[0,T]$, using the condition {\bf(J1)} we have
 \bes
 |u_1(t,x_1)-u_1(t,x_2)|
&\le& |u_1(\tau_1,x_1)-u_1(\tau_2,x_2)|+Tk_1ML(J)|x_1-x_2|
+k_1|\tau_1-\tau_2|\nonumber\\[1mm]
&&+T\|u_1(\cdot,x_1)-u_1(\cdot,x_2)\|_{C([\tau_1,t])}+k_1|\tau_1-\tau_2|\nonumber\\[1mm]
&&+Lk_1|\tau_1-\tau_2|+TL^*|x_1-x_2|+TL\|u_1(\cdot,x_1)-u_1(\cdot,x_2)\|_{C([\tau_1,t])}\nonumber\\[1mm]
&&+L\int_{\tau_1}^{t}|v_1(s,x_1)-v_1(s,x_2)|{\rm d}s\nonumber\\[1mm]
&\le&|u_1(\tau_1,x_1)-u_1(\tau_2,x_2)|+L\int_{\tau_1}^{t}|v_1(s,x_1)-v_1(s,x_2)|{\rm d}s\nonumber\\[1mm]
 &&+C\big(T|x_1-x_2|+|\tau_1-\tau_2|+T\|u_1(\cdot,x_1)
 -u_1(\cdot,x_2)\|_{C([\tau_1,t])}\big)
 \label{2.40} \ees
for all $\tau_1\le t\le \tau$. From \eqref{2.38}, one has
 \bes
|x_1-x_2|\le\df{M}{2h_0}\|g,h\|_{C([0,T])}.
 \label{2.41}\ees

In the following we estimate $|\tau_1-\tau_2|$ and $|u_1(\tau_1,x_1)-u_1(\tau_2,x_2)|$.

{\it Case 1:} $\tau_i>0$ for $i=1,2$. In this case, it is clear that
$u_1(\tau_1,x_1)=u_1(\tau_2,x_2)=0$. On the other hand, since $(g_1, h_1)\in\Gamma_T$, we have $h'_1\ge \rho c_0$ in $[0,\tau]$, and so
\bess
|\tau_1-\tau_2|\le (\rho c_0)^{-1}|h_1(\tau_1)-h_1(\tau_2)|&=&(\rho c_0)^{-1}|x_1-x_2|,
\eess

{\it Case 2:} $\tau_1>0$ and $\tau_2=0$. Then $x_2\in[-h_0,h_0]$, $x_1>h_0$, $u_1(\tau_1,x_1)=0$. Let $L(u_0)$ be the Lipschitz constant of $u_0$. It follows that
 \bess
 &|\tau_1-\tau_2|=|\tau_1-0|\le(\rho c_0)^{-1}|h_1(\tau_1)-h_1(0)|
=(\rho c_0)^{-1}|x_1-h_0|\le(\rho c_0)^{-1}|x_1-x_2|,&\\
 &|u_1(\tau_1,x_1)-u_1(\tau_2,x_2)|=|0-u_0(x_2)|=|u_0(h_0)-u_0(x_2)|\le L(u_0)|h_0-x_2|\le L(u_0)|x_1-x_2|.&
 \eess

{\it Case 3:} $\tau_1=\tau_2=0$, i.e., $x_1,x_2\in[-h_0,h_0]$. Then $|\tau_1-\tau_2|=0$,
and
 \bess
 |u_1(\tau_1,x_1)-u_1(\tau_2,x_2)|=|u_0(x_1)-u_0(x_2)|\le L(u_0)|x_2-x_1|.
\eess

In a word,
  \bes
|\tau_1-\tau_2|+|u_1(\tau_1,x_1)-u_1(\tau_2,x_2)|\le
[(\rho c_0)^{-1}+L(u_0)]|x_1-x_2|.
 \label{2.42}\ees

Now we estimate $\dd\int_{\tau_1}^t|v_1(s,x_1)-v_1(s,x_2)|{\rm d}s$. Let $y_i=\dd\frac{2x_i-g_1(\tau)-h_1(\tau)}{h_1(\tau)-g_1(\tau)}$. Then $z_1(\tau,y_i)=v_1(\tau,x_i)$. Similar to the derivation of \eqref{2.35} we have
 \bess
 \int_{\tau_1}^t|v_1(s,x_1)-v_1(s,x_2)|{\rm d}s&=&\int_{\tau_1}^t|z_1(s,y_1)-z_1(s,y_2)|{\rm d}s\\[1mm]
&\le&T|y_1-y_2|\cdot\|z_{1,y}\|_{L^\infty([0,T]\times\mathbb{R})}\\[1mm]
&\le&T C_{17}|x_1-x_2|.
 \eess
Substituting this and \eqref{2.42} into \eqref{2.40} and using \eqref{2.41}, it yields that, for $\tau_1\le t\le \tau$,
 \bess
 |u_1(t,x_1)-u_1(t,x_2)|\le C_{18}\big(\|g,h\|_{C([0,T])}+T\|u_1(\cdot,x_1)
 -u_1(\cdot,x_2)\|_{C([\tau_1,t])}\big).
 \eess
Thus we have
 \bess
 \|u_1(\cdot,x_1)-u_1(\cdot,x_2)\|_{C([\tau_1,\tau])}\le C_{18}\big(\|g,h\|_{C([0,T])}+T\|u_1(\cdot,x_1)-
 u_1(\cdot,x_2)\|_{C([\tau_1,\tau])}\big)
 \eess
Taking $T$ small such that $C_{18}T<1/2$, then
 \bess
 |u(\tau,x_1)-u(\tau,x_2)|\le\|u_1(\cdot,x_1)-u_1(\cdot,x_2)\|_{C([\tau_1,\tau])}
 \le 2C_{18}\|g,h\|_{C([0,T])}.
  \eess
Substituting this into \eqref{2.39} and by the arbitrariness of $(\tau,y)\in\Pi_T$, we get \eqref{2.23}  immediately.
 \end{proof}

\begin{proof}[Proof of Theorem \ref{th2.1}]

{\it Step 1:\, Local existence and uniqueness}. By Lemma \ref{l.a} and Lemma \ref{l2.4} we see that $\mathcal{G}(\Gamma_T)\subset\Gamma_T$ and $\mathcal{G}$ is a contraction mapping on $\Gamma_T$ when $T$ is small. The {\it Contraction Mapping Theorem} shows that problem \eqref{1.3} admits a unique solution $(\hat u,\hat v,\hat g,\hat h)$ with $(\hat g,\hat h)\in\Gamma_T$. This solution is the unique solution of \eqref{1.3} if we can prove that $(g,h)\in\Gamma_T$ holds for any solution $(u,v,g,h)$ of \eqref{1.3} defined for $t\in(0,T]$. Moreover, from the above arguments we see that $(\hat u,\hat v,\hat g,\hat h)$ satisfies \eqref{2.1} and \eqref{2.2}.

Let $(u,v,g,h)$ be an arbitrary solution of \eqref{1.3} defined in $(0,T]$. It follows that
 \bess
\begin{aligned}
&h'(t)=-\mu v_x(t,h(t))+\rho\int_{g(t)}^{h(t)}\!\!\int_{h(t)}^\infty\! J(x,y)u(t,x){\rm d}y{\rm d}x,\\[1mm]
&g'(t)=-\mu v_x(t,g(t))-\rho\int_{g(t)}^{h(t)}\!\!\int_{-\infty}^{g(t)}\! J(x,y)u(t,x){\rm d}y{\rm d}x.
\end{aligned}
 \eess
It is easy to see from the above discussions that \eqref{2.1} and \eqref{2.2} hold. And hence
 \[[h(t)-g(t)]'\le 2\mu k_3+\rho k_1(h(t)-g(t));\ \ \ 0<-g'(t), h'(t)\le\mu k_3+\rho k_1(h(t)-g(t)).\]
The first inequality in the above implies $h(t)-g(t)\le2[h_0+\mu k_3/(\rho k_1)]e^{\rho k_1t}$. So we have
 \bess
 &[h(t)-g(t)]'\le 2\mu k_3+2(\rho k_1h_0+\mu k_3)e^{\rho k_1t},&\\[1mm]
 &0<h'(t),-g'(t)\le \mu k_3+2(\rho k_1h_0+\mu k_3)e^{\rho k_1t}=R(t).&\eess
Therefore,
 \[h(t)-g(t)\le 2h_0+t\kk(2\mu k_3+2(\rho k_1h_0+\mu k_3)e^{\rho k_1t}\rr), \ \ \forall \ 0<t\le T.\]
Shrink $T$ small enough such that $T\big[2\mu k_3+2(\rho k_1h_0+\mu k_3)e^{\rho k_1T}\big]\le{\varepsilon_0}/{4}$.
Then $h(t)-g(t)\le M$ for $t\in[0,T]$. Furthermore, by using the proofs of \eqref{2.12} and \eqref{2.14}, one can show that $\rho c_0\le h'(t)\le \br$ and $-\br\le g'(t)\le-\rho c_0^*$ in $(0,T]$. Thus $(g,h)\in\Gamma_T$.

{\it Step 2:\, Global existence and uniqueness}. Assume that \qq{1.8} holds. From Step 1, we know that the system \eqref{1.3} admits a unique solution $(u,v,g,h)$ in some interval $(0,T]$.

Let $z(t,y)=v(t,x(t,y))$ and consider the problem
\bes
\left\{\begin{aligned}
&z_t=d_2\xi(t)z_{yy}+\zeta(t,y)z_y+f^*_2(t,y,w,z), &&0<t\le T,~|y|<1,\\[1mm]
&z(t,\pm 1)=0, &&0\le t\le T,\\[1mm]
&z(0,y)=v_0(h_0y)=:z_0(y), &&|y|\le 1,
\end{aligned}\right.
 \label{2.43} \ees
where $w(t,y)=u(t,x(t,y))$, $f^*_2(t,y,w,z)=f_2(t,x(t,y),w,z)$. As $z_0(y)\in W^2_p(\Sigma)$, same as the above, $z\in W^{1,2}_p(\Pi_T)\hookrightarrow C^{(1+\alpha)/2, 1+\alpha}(\Pi_T)$. Then $v_x\in C^{\alpha/2, \alpha}(\ol D^T_{g,h})$. This combined with the assumptions {\bf(f)} and {\bf(f3)} implies that the function $F_1(t,x,u)=f_1(t,x,u,v(t,x))$ is differentiable with respect to $x$. Note that $u$ satisfies
\bess
\left\{\begin{aligned}
&u_t=d_1\int_{g(t)}^{h(t)} J(x,y)u(t,y){\rm d}y-d_1u+f_1(t,x,u,v(t,x)), &&t_x<t\le T,~g(t)<x<h(t),\\[1mm]
&u(t_x,x)=\tilde u_0(x), &&g(T)<x<h(T),
\end{aligned}\right.
 \eess
where
 \begin{align*}
\tilde u_0(x)=\left\{\begin{aligned}
&0,& &|x|>h_0,\\
&u_0(x),& &|x|\le h_0,
\end{aligned}\right. \quad t_x=\left\{\begin{aligned}
&t_{x,g}& &\mbox{if }\ x\in[g(T),-h_0), \ x=g(t_{x,g}),\\
&0& &\mbox{if } \ |x|\le h_0,\\
&t_{x,h}& &\mbox{if } \ x\in(h_0,h(T)], \ x=h(t_{x,h}).
\end{aligned}
\right.
  \end{align*}
View $G(t,x)=\int_{g(t)}^{h(t)} J(x,y)u(t,y){\rm d}y$ as a known function. Then for $t\in[0,T]$, $t_x, u_0(x)$ and $G(t,x)$ are Lipschitz continuous in $x\in[g(t),h(t)]$. Using the continuous dependence of the solution with respect to the parameters we can show that for $t\in[0,T]$, $u(t,x)$ is Lipschitz continuous in $x\in[g(t),h(t)]$. Clearly, $u_t\in C(\ol D^T_{g,h})$. This implies $u\in C^{1,1-}(\ol D^T_{g,h})$ and hence $w\in C^{1,1-}(\ol\Pi_T)$.

It is easy to see that the function
 \[\int_{g(t)}^{h(t)}\!\int_{h(t)}^\infty\! J(x,y)u(t,x){\rm d}y{\rm d}x\]
of $t$ is differentiable. So $h'(t)\in C^{\alpha/2}([0,T])$ as $v_x(t,h(t))\in C^{\alpha/2}([0,T])$. Similarly, $g'(t)\in C^{\alpha/2}([0,T])$. Set  $F_2(t,y,z)=f_2^*(t,y,w(t,y),z)$. Then, by using {\bf(f4)} (or \qq{1.8}), there hold
 \[\xi\in C^{\alpha/2}([0,T]), \ \ \zeta(\cdot,\cdot), \,F_2(\cdot,\cdot,z)\in C^{\alpha/2, \alpha}(\Pi_T).\]
By the interior Schauder theory we have $z\in C^{1+\alpha/2, 2+\alpha}([\ep,T]\times\Sigma)$ with $0<\ep<T$, which implies $v(T,x)\in C^2([g(T), h(T)])$.

Recall that $u(T,x)$ is Lipschitz continuous in $x\in[g(T),h(T)]$. We can take $(u(T,x),v(T,x))$ as an initial function and $[g(T),h(T)]$ as the initial habitat and then use Step 1 to extend the solution from $t=T$ to some $T'>T$. Assume that $(0,T_0)$ is the maximal existence interval of $(u,v,g,h)$ obtained by such extension process. We shall prove that $T_0=\infty$. Assume on the contrary that $T_0<\infty$.

Since $h',-g'>0$ in $(0,T_0)$, we can define $h(T_0)=\dd\lim_{t\rightarrow T_0}h(t)$ and $g(T_0)=\dd\lim_{t\rightarrow T_0}g(t)$. By the above arguments,
\[h(T_0)-g(T_0)\le 2h_0+T_0\big(2\mu k_3+2(\rho k_1h_0+\mu k_3)e^{\rho k_1T_0}\big).\]
In view of $0<-v_x(t,h(t)),v_x(t,g(t))\le k_3,0<u\le k_1, 0<v\le k_2$ for $t\in(0,T_0)$, $h',g'\in L^\infty((0,T_0))$. Making use of Sobolev embedding theorem : $W^1_\infty((0,T_0))\hookrightarrow C([0,T_0])$, we have $g,h\in C([0,T_0])$ with $g(T_0),h(T_0)$ defined as above. It follows from the parabolic $L^p$ theory and Sobolev embedding theorem that $v\in C^{(1+\alpha)/2,1+\alpha}(\ol D^{T_0}_{g, h})$. These facts show that the first differential equation holds for $0\le t\le T_0$. Similar to the above, $u\in C^{1,1-}(\ol D^{T_0}_{g,h})$, $g',\,h'\in C^{\alpha/2}([0,T_0])$. Consider the problem \qq{2.43} with $T$ replaced by $T_0$. Same as above, we can show that \qq{2.43} has a unique solution
$z\in W^{1,2}_p(\Pi_{T_0})\cap C^{1+\alpha/2,2+\alpha}([\ep, T_0]\times\Sigma)$. Consequently, $v(T_0,x)\in C^2([g(T_0), h(T_0)])$.

Due to $u(t,h(t))=v(t,h(t))=0$ in $[0,T_0)$, it is easy to see that $u(T_0,h(T_0))=v(T_0,h(T_0))=0$. Moreover, by the parabolic maximum principle and Lemma \ref{l2.2} we have $u(T_0,x)>0,v(T_0,x)>0$ for $x\in(g(T_0),h(T_0))$.

Therefore, we may treat $(u(T_0,x),v(T_0,x))$ as an initial function and $[g(T_0),h(T_0)]$ as the initial habitat and apply Step 1 to show that the solution of \eqref{1.3} can be extended to some $(0,\hat T)$ with $\hat T>T_0$. This contradicts the definition of $T_0$. Hence, $T_0=\infty$.

It follows from the above arguments that $(g, h)\in\mathbb G^T\times\mathbb H^T$, $(u,v)\in\mathbb X^T_{g,h}$, and $(u, v,g,h)$ satisfies \eqref{2.1}, \eqref{2.2} and \eqref{b.3}. The proof is end.
\end{proof}

\end{document}